\let\oldsqrt\sqrt
\def\sqrt{\mathpalette\DHLhksqrt}
\def\DHLhksqrt#1#2{%
\setbox0=\hbox{$#1\oldsqrt{#2\,}$}\dimen0=\ht0
\advance\dimen0-0.2\ht0
\setbox2=\hbox{\vrule height\ht0 depth -\dimen0}%
{\box0\lower0.4pt\box2}}
\newcommand{\R}{\mathbb{R}} % reelle Zahlen
\newcommand{\N}{\mathbb{N}} % natuerliche Zahlen
\newcommand{\dist}{\textnormal{dist}} % dist ...
\newcommand{\supp}{\textnormal{supp}} % supp ...
\newcommand{\ov}{\overline}
\renewcommand{\k}{\mathbf{k}}
\renewcommand{\H}{{\mathcal H}}
\renewcommand{\phi}{\varphi}
\newcommand{\rz}{\mathbb{R}}
\newcommand{\cB}{{\mathcal B}}
\newcommand{\cC}{{\mathcal C}}
\newcommand{\cE}{{\mathcal E}}
\newcommand{\cH}{{\mathcal H}}
\newcommand{\cL}{{\mathcal L}}
\newcommand{\cR}{{\mathcal R}}
\newcommand{\cV}{{\mathcal V}}
\newcommand{\B}{{\mathcal B}}
\newcommand{\eps}{\varepsilon}
\theoremstyle{definition}
\newtheorem{defi}{Definition}[section]
\theoremstyle{plain} %default%plain
\newtheorem{thm}[defi]{Theorem}
\newtheorem{prop}[defi]{Proposition}
\newtheorem{lemma}[defi]{Lemma}
\newtheorem{cor}[defi]{Corollary}
\theoremstyle{definition}
\numberwithin{equation}{section}
 \title{Morse index versus radial symmetry for fractional Dirichlet problems}
\author[]
{Mouhamed Moustapha Fall, Pierre Aime Feulefack, Remi Yvant Temgoua, Tobias Weth}
\address{African Institute for Mathematical Sciences in Senegal (AIMS Senegal), 
KM 2, Route de Joal, B.P. 14 18. Mbour, S\'en\'egal.}
\email{mouhamed.m.fall@aims-senegal.org}
\address{Goethe-Universit\"{a}t Frankfurt, Institut f\"{u}r Mathematik.
Robert-Mayer-Str. 10, D-60629 Frankfurt, Germany.}
\email{feulefac@math.uni-frankfurt.de}
\address{Goethe-Universit\"{a}t Frankfurt, Institut f\"{u}r Mathematik.
Robert-Mayer-Str. 10, D-60629 Frankfurt, Germany.}
\email{temgoua@math.uni-frankfurt.de}
\address{Goethe-Universit\"{a}t Frankfurt, Institut f\"{u}r Mathematik.
Robert-Mayer-Str. 10, D-60629 Frankfurt, Germany.}
\email{weth@math.uni-frankfurt.de}
\date{\today}
\begin{document}
\maketitle
\begin{abstract}
	In this work, we provide an estimate of the Morse index of radially symmetric sign changing bounded weak solutions $u$
 to the semilinear fractional Dirichlet problem 
$$		
(-\Delta)^su = f(u)\qquad \text{ in $\B$},\qquad \qquad u =  0\qquad \text{in $\quad\rz^{N}\setminus \B$,}
$$
where $s\in(0,1)$,  $\B\subset \R^N$ is the unit ball centred at zero  and the nonlinearity  $f$ is of class $C^1$. We prove that  for $s\in(1/2,1)$  any radially  symmetric  sign changing  solution of the above problem has a Morse index greater than or equal to $N+1$. If $s\in (0,1/2],$ the same conclusion holds under additional assumption on $f$. In particular, our results apply to the Dirichlet eigenvalue problem for the operator $(-\Delta)^s$ in $\B$ for all $s\in (0,1)$, and imply that eigenfunctions corresponding to the second Dirichlet eigenvalue in $\B$ are antisymmetric. This resolves a conjecture of Ba\~{n}uelos and Kulczycki.  
\end{abstract}

{\footnotesize
\begin{center}
\textit{Keywords.} Morse index, fractional Laplacian, radial solution, Dirichlet eigenvalues, Ba\~{n}uelos-Kulczycki conjecture.
\end{center}
}

\section{Introduction and main result}\label{intro}%%%%%%%%%%%%%%%%%%%%%%%%%%%%%%%%%

 The purpose of this paper is to estimate the Morse index of radial sign changing solutions of the problem
	\begin{equation}\label{eq1}
	\quad\left\{\begin{aligned}
		(-\Delta)^su &= f(u) && \text{ in\ \  $\B$}\\
		u &=  0             && \text{ in\ \  }\rz^{N}\setminus \B,	 
	\end{aligned}\right.
	\end{equation}
	 where $s\in(0,1)$, $\B\subset \R^N$ is the unit ball centred at zero and where the nonlinearity $f: \R \rightarrow \R$ is of class $C^1$. The fractional Laplacian operator $(-\Delta)^s$ is defined for all $u\in C^{2}_c(\R^N)$ by
\begin{equation}\nonumber
 (-\Delta)^s u(x)=c(N,s)\lim_{\epsilon\to 0^+}\int_{\R^N\setminus B_{\epsilon}(x)}\frac{u(x)-u(y)}{|x-y|^{N+2s}}\ dy,
\end{equation}
where $c(N,s)= 2^{2s}\pi^{-\frac N2}s\frac{\Gamma(\frac{N+2s}2)}{\Gamma(1-s)}$  is a  normalization constant. % see   \cite{bisci2016variational, di2012hitchhiker} and also for suitable properties on fractional Sobolev spaces.\\
The operator $(-\Delta)^s$ can be seen as the infinitesimal generator of an isotropic stable L\'{e}vy processes (see \cite{applebaum2004levy}), and it arises in specific mathematical models within several areas of physics, biology, chemistry and finance (see \cite{applebaum2004levy,applebaum2009levy,bucur-valdinoci}). For basic properties of $(-\Delta)^s$ and associated function spaces, we refer to \cite{di2012hitchhiker}. 

In recent years, the study of linear and nonlinear Dirichlet boundary value problems involving  fractional Laplacian has attracted extensive and steadily growing attention, whereas, in contrast to the local case $s=1$, even basic questions
still remain largely unsolved up to now. Even in the linear case where $f(t):= \lambda t$, the structure of Dirichlet eigenvalues and eigenfunctions of the fractional Laplacian on the unit ball $\B$ is not completely understood. In particular, we mention a conjecture of Ba\~{n}uelos and Kulczycki which states that every Dirichlet eigenfunction $u$ of $(-\Delta)^s$ on $\B$ corresponding to the second Dirichlet eigenvalue is antisymmetric, i.e., it satisfies $u(-x)=-u(x)$ for $x \in \B$. So far, by the results in \cite{ban-kulc,kwas,RF19,dyda2017eigenvalues}, this conjecture has been verified in the special cases $N \le 3$, $s \in (0,1)$ and $4 \le N \le 9$, $s= \frac{1}{2}$. In the present paper, we will derive the full conjecture essentially as a corollary of our main result on the semilinear Dirichlet problem (\ref{eq1}), see Theorem~\ref{symmetry-of-higher-eigenfunctions} below.

Our main result on sign changing radial solutions of (\ref{eq1}) is heavily inspired by the seminal work of Aftalion and Pacella \cite{aftalion2004qualitative}, where the authors studied qualitative properties of sign changing solutions of the local semilinear elliptic problem
\begin{equation}\label{eq01}
-\Delta u = f(u)\ \ \ \ \text{in}\ \ \Omega,\ \ \ \  u = 0\ \ \text{on}\ \ \ \ \partial\Omega,
\end{equation}
where $\Omega \subset \R^N$ is a ball or an annulus centered at zero and $f\in C^1(\R)$. It is proved in \cite[Theorem 1.1]{aftalion2004qualitative} that any radial sign changing solution of \eqref{eq01} has Morse index greater than or equal to $N+1.$

In the following, we present a nonlocal version of this result in the case where $\Omega$ is the unit ball in $\R^N$. We need to fix some notation first. Consider the function space
\begin{equation}\label{wsp-d-special-case}
\cH^{s}_{0}(\B):=\{u\in H^{s}(\R^N) : u\equiv 0 ~\text{ on\ \ $\R^{N}\setminus \B$}\} \subset H^{s}(\R^N).
\end{equation}
By definition, a function $u \in \cH^s_0(\B) \cap L^\infty(\B)$ is a weak solution of (\ref{eq1}) if 
$$
\cE_s(u,v) = \int_{\B} f(u)v\,dx \qquad \text{for all $v \in \cH^s_0(\B)$,}
$$
where 
\begin{equation}\label{bilinear form}
(v,w) \mapsto \cE_s(v,w):=\frac{c(N,s)}{2}\int_{\R^N}\int_{\R^N}\frac{(v(x)-v(y))(w(x)-w(y))}{|x-y|^{N+2s}}\ dxdy.
\end{equation}
is the bilinear form associated with $(-\Delta)^s$. By definition, the \textit{Morse index} $m(u)$ of a weak solution $u \in \cH^s_0(\B) \cap L^\infty(\B)$ of \eqref{eq1} is the maximal dimension of a subspace $X \subset\cH^s_0(\B)$ where the quadratic form 
\begin{equation}\label{bilinear-form-L}
(v,w) \mapsto \cE_s(v,w)-\int_{\B}f'(u)vw\,dx 
\end{equation}
associated to the linearized operator $L:=(-\Delta)^s-f'(u)$ is negative definite. Equivalently, $m(u)$ can be defined as the number of the negative Dirichlet eigenvalues of $L$ counted with their multiplicity. 

Our first main result reads as follows.  
 \begin{thm}\label{T}
Let $u$ be a radially symmetric sign changing solution of problem \eqref{eq1}, and suppose that \underline{one} of the following additional conditions holds.
 \begin{itemize}
 \item[(A1)] $s \in (\frac{1}{2},1)$.
 \item[(A2)] $s \in (0,\frac{1}{2}]$, and 
   \begin{equation}
     \label{eq:add-cond}
 \int_0^t f(\tau)d\tau > \frac{N-2s}{2N}\, t f(t) \qquad \text{for $t \in \R \setminus \{0\}$.}       
   \end{equation}
 \end{itemize}
Then $u$ has Morse index greater than or equal to $N+1$. 
 \end{thm}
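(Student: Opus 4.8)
\medskip
\noindent\textbf{Proof strategy.}
The plan is to use the rotational symmetry of the problem. Since $u$ is radial, $f'(u)$ is radial as well, so the linearized operator $L=(-\Delta)^s-f'(u)$ commutes with the action of $O(N)$ on $\cH^s_0(\B)$. Decomposing $\cH^s_0(\B)$ into its $O(N)$-isotypic components --- writing each function as a sum of terms $\psi(|x|)Y_k(x/|x|)$ with $Y_k$ a spherical harmonic of degree $k$ --- the quadratic form \eqref{bilinear-form-L} becomes block-diagonal, and the Morse index decomposes as $m(u)=\sum_{k\ge 0}d_k\,m_k$, where $d_k$ is the dimension of the space of degree-$k$ spherical harmonics ($d_0=1$, $d_1=N$) and $m_k$ is the number of negative Dirichlet eigenvalues of the corresponding ``radial block'' $L_k$ of $L$ (acting on profiles $\psi$ with the weight $r^{N-1}\,dr$). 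So it suffices to prove $m_0\ge 1$ and $m_1\ge 1$: then $m(u)\ge 1+N=N+1$.

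For the vector sector $k=1$: differentiating \eqref{eq1} yields $L(\partial_i u)=\partial_i f(u)-f'(u)\partial_i u=0$ in $\B$, so $\partial_i u=\frac{x_i}{|x|}u'(|x|)$ is a formal zero mode of $L$ lying (formally) in the degree-one sector, with radial profile $u'$. A radial sign-changing function vanishing on $\partial\B$ cannot be monotone in $r$, so $u'$ changes sign on $(0,1)$; moreover $u'$ has a fixed sign for small $r>0$ (as $u$ is a non-constant solution), so $u'$ is of constant sign on a maximal interval $(0,r_0)\subset(0,1)$ with $u'(r_0)=0$. In the local case $s=1$ this already closes the sector: $u'$ solves the ODE $L_1u'=0$, is regular at and vanishes at $r=0$, and also vanishes at $r_0$, so a Sturm comparison with the positive first Dirichlet eigenfunction of $L_1$ on $(0,1)$ forces the first eigenvalue of $L_1$ to be strictly negative (if it were nonnegative, Sturm comparison would produce either a zero of that eigenfunction inside $(0,r_0)$, which is impossible, or proportionality of $u'$ to it, whence $u'(1)=0$, contradicting the Hopf-type boundary estimate). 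For $s\in(0,1)$ the block $L_1$ is nonlocal in $r$ and classical Sturm theory is unavailable; the substitute I would aim for is to build an \emph{admissible} test profile --- a suitable truncation or modification of $u'$ supported away from $\partial\B$, or equivalently to pass to the Caffarelli--Silvestre extension and truncate in the extension variable --- and to estimate the resulting nonlocal and near-boundary remainders. This is exactly where the hypotheses enter: since $u\asymp\dist(\cdot,\partial\B)^s$ near $\partial\B$ one has $|\nabla u|\asymp\dist(\cdot,\partial\B)^{s-1}$, which is square-integrable precisely when $s>\tfrac12$ (condition (A1)); when $s\le\tfrac12$ one instead rewrites the offending boundary integral, via the Pohozaev identity for the fractional Laplacian (Ros-Oton and Serra), in terms of $\int_{\partial\B}\big(u/\dist(\cdot,\partial\B)^s\big)^2$ and the bulk quantities $\int_\B F(u)$, $\int_\B uf(u)$, where $F(t):=\int_0^t f(\tau)\,d\tau$, and hypothesis \eqref{eq:add-cond} --- that is, $NF(t)>\tfrac{N-2s}{2}\,tf(t)$ --- supplies the favourable sign. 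Either way one obtains $\lambda_1(L_1)<0$, hence $m_1\ge 1$.

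For the radial sector $k=0$ I would argue in the same spirit, replacing $\partial_i u$ by the dilation field $x\cdot\nabla u=|x|u'(|x|)$, which by the $2s$-homogeneity of $(-\Delta)^s$ satisfies $L(x\cdot\nabla u)=2s\,f(u)$ in $\B$, and using that $u$ itself is sign-changing. Testing the quadratic form along an admissible truncation of $x\cdot\nabla u$ --- with the near-boundary terms again absorbed using (A1)$/$(A2) through the Pohozaev identity --- produces a radial direction on which the form is negative, so $m_0\ge 1$. Combining the two sectors gives $m(u)\ge N+1$.

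The hard part --- and the source of the dichotomy (A1)$/$(A2) --- is precisely this last point. Over $\R$ the whole argument runs through ODE and Sturm comparison arguments and needs no hypothesis on $f$ beyond $f\in C^1$; in the fractional setting the natural candidates $\partial_i u$ and $x\cdot\nabla u$ are not in $\cH^s_0(\B)$ because $|\nabla u|\asymp\dist(\cdot,\partial\B)^{s-1}$ near $\partial\B$, and for $s\le\tfrac12$ they fail even to be square-integrable, so the truncation errors do not vanish and must be absorbed using sharp boundary asymptotics (the Pohozaev identity) together with a sign restriction on $f$, namely \eqref{eq:add-cond} --- which for the linear nonlinearity $f(t)=\lambda t$ reduces to $s>0$ and therefore holds for all $s\in(0,1)$, thereby yielding the application to the Ba\~nuelos--Kulczycki conjecture.
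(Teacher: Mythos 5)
Your high-level framing (rotational invariance, one negative direction in the radial sector plus $N$ in the degree-one sector) is consistent with the paper, which realizes the same count by exhibiting an explicit $(N+1)$-dimensional negative subspace $\mathrm{span}\{\phi_{1,L},d_1,\dots,d_N\}$ rather than by an isotypic decomposition $m(u)=\sum_k d_k m_k$ (which you assert but do not prove). However, there is a genuine gap at the decisive step: you never actually produce an admissible test function in the degree-one sector on which the quadratic form $\cE_{s,L}$ is negative. You write that you ``would aim for'' a truncation of $u'$ near $\partial\B$ (or a Caffarelli--Silvestre cut-off) and would ``estimate the resulting nonlocal and near-boundary remainders,'' but this is precisely the hard part and it is left undone; a naive cut-off of $\partial_j u$ produces nonlocal error terms of the same order as the quantity you are trying to make negative, and no Sturm-type comparison is available to substitute for the sign argument. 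The paper's actual device is different and is the heart of the proof: it replaces $v^j=\partial_j u$ by the ``folded'' function $d_j=(v^j)^+1_{H^j_+}-(v^j)^-1_{H^j_-}$ (or its mirror image, according to the sign of $\psi_0(1)=u/\delta^s|_{\partial\B}$). Because $\partial_r u$ has a fixed sign near $\partial\B$ when $\psi_0(1)\neq 0$ (via Proposition~\ref{reg-prop-solutions}(iii)), $d_j$ is automatically compactly supported in $\B$ --- no truncation error arises at all --- and then the combination of the distributional identity $\cE_{s,L}(v^j,d_j)=0$ with the strict positivity of the reflected kernel $k(x-y)-k(\sigma_j(x)-y)$ on $H^j_+\times H^j_+$ yields $\cE_{s,L}(d_j,d_j)<0$ (Lemma~\ref{l4}). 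Nothing equivalent to this mechanism appears in your proposal.

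Your account of where (A1)/(A2) enter is also not quite right. In the paper, the Pohozaev identity under (A2) is used only to guarantee $\psi_0(1)\neq 0$ when $s\le\frac12$, so that $d_j$ has compact support; it is not used to ``absorb'' boundary remainders of a truncation. For $s>\frac12$ the dichotomy is: if $\psi_0(1)\neq 0$ one again uses compact support, and if $\psi_0(1)=0$ one invokes Grubb's regularity ($\psi\in C^1(\ov\B)$) to show $\nabla u$ extends continuously by zero, whence $v^j\in\cH^s_0(\B)$ itself --- the criterion is the vanishing or not of $u/\delta^s$ on $\partial\B$, not square-integrability of $\nabla u$. Finally, your radial-sector argument via $x\cdot\nabla u$ and the commutator identity $L(x\cdot\nabla u)=2sf(u)$ is unnecessary: once one admissible direction with negative energy exists, the variational characterization forces $\lambda_{1,L}<0$, and the first eigenfunction is radial by simplicity, which is how the paper obtains the $(N+1)$-st dimension. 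As written, your argument establishes the theorem only modulo the unproved key estimate in the degree-one sector.
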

 
 We briefly comment on the inequality (\ref{eq:add-cond}).  In our proof of Theorem~\ref{T}, this assumption arises when we use the Pohozaev identity for the fractional Laplacian, see \cite[Theorem 1.1]{Ros14-1}. It is satisfied for homogeneous nonlinearities with subcritical growth, i.e., if 
$$
f(t)= \lambda |t|^{p-2}t\qquad \text{with $\lambda >0$ and $2 \le p < \frac{2N}{N-2s}$.}
$$
We also note that, in the supercritical case where $\int_0^t f(\tau)d\tau < \frac{N-2s}{2N}\, t f(t)$ for $t \in \R \setminus \{0\}$, problem (\ref{eq1}) does not admit any nontrivial weak solutions $u \in \cH^s_0(\B) \cap L^\infty(\B)$. This is a consequence of the Pohozaev identity stated in \cite[Theorem 1.1]{Ros14-1}.

In particular, assumption~(\ref{eq:add-cond}) is satisfied in the linear case $t \mapsto \lambda t$ with $\lambda >0$. In fact, we can deduce the following result for the Dirichlet eigenvalue problem 
	\begin{equation}\label{eq1-linear}
	\quad\left\{\begin{aligned}
		(-\Delta)^su &= \lambda u && \text{ in\ \  $\B$}\\
		u &=  0             && \text{ in\ \  }\rz^{N}\setminus \B,	 
	\end{aligned}\right.
	\end{equation}
from Theorem~\ref{T}, thereby providing a complete positive answer to a conjecture by Ba\~{n}uelos and Kulczycki (see \cite{dyda2017eigenvalues}).  

 \begin{thm}\label{symmetry-of-higher-eigenfunctions}  
 Let $N \ge 1$ and $0<s<1$, and let $\lambda_2 >0$ be the second eigenvalue of problem~(\ref{eq1-linear}). Then every eigenfunction $u$ corresponding to $\lambda_2$ is antisymmetric, i.e. it satisfies $u(-x)=-u(x)$ for $x \in \B$.
\end{thm}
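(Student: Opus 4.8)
\textbf{Proof plan for Theorem~\ref{symmetry-of-higher-eigenfunctions}.}
The plan is to derive the antisymmetry of second eigenfunctions from Theorem~\ref{T} applied to the linear nonlinearity $f(t) = \lambda_2 t$. First I would recall the variational characterization of $\lambda_2$: writing $\lambda_1 > 0$ for the first eigenvalue, which is simple with a positive eigenfunction $\varphi_1$, one has $\lambda_2 = \inf\{\cE_s(v,v) : v \in \cH^s_0(\B),\ \|v\|_{L^2(\B)} = 1,\ \int_\B v\varphi_1\,dx = 0\}$, and any minimizer $u$ is an eigenfunction for $\lambda_2$. The key elementary observation is that the Morse index of such an eigenfunction $u$, viewed as a weak solution of \eqref{eq1} with $f(t)=\lambda_2 t$, equals the number of eigenvalues of $(-\Delta)^s$ on $\B$ that are strictly less than $\lambda_2$, counted with multiplicity; this is immediate because the linearized operator is $L = (-\Delta)^s - \lambda_2$ and its negative eigenvalues are exactly $\lambda_k - \lambda_2 < 0$.

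Next I would argue by contradiction. Suppose some eigenfunction $u$ associated with $\lambda_2$ is \emph{not} antisymmetric. Every eigenfunction for $\lambda_2$ is sign changing, since eigenfunctions orthogonal to the positive ground state $\varphi_1$ must change sign. There are two cases. If $u$ is radially symmetric, then Theorem~\ref{T} (applicable here because $f(t) = \lambda_2 t$ satisfies assumption~(A2) — indeed $\int_0^t f(\tau)\,d\tau = \frac{\lambda_2}{2}t^2 > \frac{N-2s}{2N}\lambda_2 t^2 = \frac{N-2s}{2N}tf(t)$ for $t \neq 0$, and of course it also covers case~(A1)) forces $m(u) \geq N+1$. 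But $m(u)$ equals the number of eigenvalues below $\lambda_2$ with multiplicity, namely the multiplicity of $\lambda_1$, which is $1$. For $N \geq 1$ this gives $1 \geq N+1$, a contradiction. Hence no radial eigenfunction exists for $\lambda_2$. If instead $u$ is not radial, I would use rotational invariance: for every rotation $R \in O(N)$, $u \circ R$ is again a $\lambda_2$-eigenfunction, so the $\lambda_2$-eigenspace $E_2$ is $O(N)$-invariant. One then decomposes $E_2$ into irreducible $O(N)$-representations; the possibilities are spherical harmonics of some degree $\ell \geq 0$ times a radial profile. Degree $\ell = 0$ would give a radial eigenfunction, already excluded. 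I would show that $\ell \geq 2$ is impossible and $\ell = 1$ forces antisymmetry.

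To rule out $\ell \geq 2$, I would again invoke Theorem~\ref{T}, or rather its proof mechanism, but the cleaner route is this: if $E_2$ contained a component built from degree-$\ell$ spherical harmonics with $\ell \geq 2$, one can produce from it — via the same construction used in the proof of Theorem~\ref{T}, where test functions of the form $\partial_i u$ or $x_i$-weighted variants generate an $(N+1)$-dimensional negative subspace for the linearized form — enough linearly independent directions on which $\cE_s(\cdot,\cdot) - \lambda_2\|\cdot\|_{L^2}^2$ is negative, contradicting that only the one-dimensional span of $\varphi_1$ lies strictly below $\lambda_2$. Concretely, a degree-$\ell$ eigenfunction has at least as many nodal-type negative directions as a degree-$2$ one, and degree $2$ already exceeds the count $1$. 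Since degree $0$ is excluded and degree $\geq 2$ is excluded, $E_2$ consists exactly of degree-$1$ spherical harmonics times a fixed radial profile $\rho(r)$; every such function is odd, i.e. $u(-x) = -u(x)$.

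The main obstacle I anticipate is the representation-theoretic bookkeeping in the non-radial case: one must be careful that the $O(N)$-decomposition of $E_2$ cannot mix degrees, and that the Morse-index count $m(u) = \#\{k : \lambda_k < \lambda_2\} = 1$ is correctly transferred to each irreducible component — this uses that the linearization is the \emph{same} operator $(-\Delta)^s - \lambda_2$ regardless of which $\lambda_2$-eigenfunction $u$ we pick, so $m(u)$ is independent of $u$. Once that is pinned down, the contradiction with Theorem~\ref{T} (which needs $m(u) \geq N+1$ for a radial sign-changing solution, whereas a degree-$0$ component would be such a solution with $m(u) = 1 < N+1$) closes the argument, and the surviving degree-$1$ case yields exactly the claimed antisymmetry for all $N \geq 1$ and all $s \in (0,1)$.
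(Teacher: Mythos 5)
Your treatment of the radial case is exactly the paper's argument and is correct: for $f(t)=\lambda_2 t$ assumption (A2) holds since $\tfrac12>\tfrac{N-2s}{2N}$, a second eigenfunction is sign changing and has Morse index equal to $\#\{k:\lambda_k<\lambda_2\}=1$, and Theorem~\ref{T} would force Morse index $\ge N+1\ge 2$ if it were radial; contradiction. This is the paper's only new input into Theorem~\ref{symmetry-of-higher-eigenfunctions}.

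The gap is in your exclusion of components of degree $\ell\ge 2$. The paper does not need (and does not attempt) a Morse-index argument here: it imports from \cite{dyda2017eigenvalues} the separation-of-variables structure (Proposition~\ref{sec:remarks-linear-case}: every eigenfunction is a sum of terms $V_\ell(x)\phi_{N+2\ell,n}(|x|)$ with eigenvalues $\lambda_{N+2\ell,n}$), the strict monotonicity \eqref{eq:sequence-increasing} of $d\mapsto\lambda_{d,0}$, and the simplicity bound \eqref{eq:simplicity-consequence}. These give $\lambda_2=\min(\lambda_{N+2,0},\lambda_{N,1})$, so the only alternative to a radial second eigenfunction is a degree-one one, and once the radial case is excluded one gets $Z_{\lambda_2}=\{(1,0)\}$; degree $\ell\ge2$ never enters because $\lambda_{N+2\ell,n}\ge\lambda_{N+2\ell,0}>\lambda_{N+2,0}=\lambda_2$. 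Your proposed substitute --- running ``the same construction used in the proof of Theorem~\ref{T}'' on a degree-$\ell$ eigenfunction to produce $\ge 2$ negative directions --- does not work as stated: that construction uses in an essential way that $u$ is radial (the oddness of $\partial_j u$ under $\sigma_j$, the sign of $\partial_r u$ near $\partial\B$ controlled by $\psi_0(1)$, the Pohozaev identity for radial $u$), so it does not transfer to $u=V_\ell\phi$. Moreover, the Morse index of \emph{any} second eigenfunction is $1$ by definition of $\lambda_2$, independently of its angular degree, so ``degree $2$ already exceeds the count $1$'' is not a contradiction unless you actually exhibit a second negative direction, which you do not. To close the gap you must invoke the eigenvalue ordering $\lambda_{N+2\ell,0}>\lambda_{N+2,0}$ for $\ell\ge2$ --- i.e.\ precisely the monotonicity result of \cite{dyda2017eigenvalues} that your proposal never uses --- or some equivalent comparison of angular modes, which is a nontrivial fact for the fractional Laplacian. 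With that citation added (together with the separation-of-variables result, which you also assume implicitly in your $O(N)$-decomposition), your proof coincides with the paper's.
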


In recent years, partial results towards this conjecture have been obtained in  \cite{ban-kulc,kwas,RF19,dyda2017eigenvalues}, covering the special cases $N \le 3$, $s \in (0,1)$ and $4 \le N \le 9$, $s= \frac{1}{2}$. More precisely, in \cite[Theorem 5.3]{ban-kulc}, Ba\~{n}uelos and Kulczycki proved antisymmetry of second eigenfunctions in the special case $N=1$, $s=\frac{1}{2}$. In \cite{kwas}, this result was extended to $N=1$, $s \in [\frac{1}{2},1)$. Recently in \cite{dyda2017eigenvalues}, the conjecture was proved in the cases $N \le 2$, $s \in (0,1)$ and $3 \le N \le 9$, $s = \frac{1}{2}$. Moreover, in \cite{RF19}, the result has been proved for $N=3$, $s \in (0,1)$. 

While the proofs in these papers are based on fine eigenvalue estimates, our proof of Theorem~\ref{symmetry-of-higher-eigenfunctions} is completely different: In addition to Theorem~\ref{T}, we shall only use the following important alternative which is implicitely stated in \cite[p. 503]{dyda2017eigenvalues}: {\em Either (\ref{eq1-linear}) admits a radially symmetric eigenfunction corresponding to the second eigenvalue $\lambda_2$, or every eigenfunction corresponding to $\lambda_2$ is a product of a linear and a radial function.} Since every such eigenfunction $u$ is a sign changing solution of (\ref{eq1}) with $t \mapsto f(t)= \lambda_2 t$ and has Morse index $1 < N+1$, it cannot be radially symmetric as a consequence of Theorem~\ref{T}. Hence $u$ must be a product of a linear and a radial function, and therefore $u$ is antisymmetric. This completes the proof of Theorem~\ref{symmetry-of-higher-eigenfunctions}. For a more detailed presentation of this argument and the underlying results from \cite{dyda2017eigenvalues}, see Section~\ref{alternative} below.

We briefly comment on the proof of Theorem~\ref{T}. The general strategy, inspired by the paper \cite{aftalion2004qualitative} of Aftalion and Pacella for the local problem (\ref{eq01}), is to use partial derivatives of $u$ to construct suitable test functions which allow to estimate the Morse index of $u$. In the nonlocal case, several difficulties arise since local PDEs techniques do not apply. The most severe difficulty is related to the fact that weak solutions $u \in \cH^s_0(\B) \cap L^\infty(\B)$ of (\ref{eq1}) have much less boundary regularity than solutions of (\ref{eq01}), see Proposition~\ref{reg-prop-solutions} for details. Moreover, even though there exists a fractional version of the Hopf boundary lemma related to the fractional boundary derivative $\frac{u}{\delta^s}$ (see \cite[Proposition 3.3]{fall-jarohs-2015}), it does not apply to sign changing solutions of (\ref{eq1}) due to the non-locality of the problem. We mention 
 at this point that the classical Hopf boundary lemma is used in \cite{aftalion2004qualitative} together with an extra assumption on $f(0)$, but a slight change of the proof, exploiting the local character of the problem, allows to deal with solutions $u$ having a vanishing derivative on the boundary; therefore \cite[Theorem 1.1]{aftalion2004qualitative} extends to arbitrary nonlinearities $f \in C^1(\R)$\footnote{We wish to thank the referee for pointing out this fact.}. In the nonlocal case of radial solutions $u$ of (\ref{eq1}), it is more difficult to deal with possible oscillations of the radial derivative of $u$ close to the boundary. In our proof of Theorem~\ref{T}, we distinguish two cases. In the case $s \in (\frac{1}{2},1)$, we use a regularity result of Grubb given in \cite[Theorem 2.2]{Gr14} to complete the argument in the case where $\frac{u}{\delta^s}$ vanishes on $\partial \B$. Moreover, in the case $s \in (0,\frac{1}{2}]$, we use the extra assumption (\ref{eq:add-cond}) to ensure that $\frac{u}{\delta^s}$ does not vanish on the boundary. Here we point out that (\ref{eq:add-cond}) implies $f(0)=0$, while no extra assumption on $f(0)$ is needed in the case $s \in (\frac{1}{2},1)$.

We point out that our proof of Theorem~\ref{T} does not use the extension method of Caffarelli and Silvestre \cite{caffarelli2007extension}, which allows to reformulate (\ref{eq1}) as a boundary value problem where 
$(-\Delta)^s$ arises as a Dirichlet-to-Neumann type operator. We therefore expect that our approach applies to a more general class of nonlocal operators in place of $(-\Delta)^s$.

We wish to add some remarks on the role of Morse index estimates in the variational study of (\ref{eq1}). In the case where 
$f\in C^1(\R)$ has subcritical growth, weak solutions of (\ref{eq1}) are precisely the critical points of the associated energy functional $J: \cH^s_{0}(\B)\to\R$ defined by
$$
J(u) =  \frac{c(N,s)}{2}\displaystyle\int_{\R^N}\int_{\R^N}\frac{|u(x)-u(y)|^2}{|x-y|^{N+2s}}\ dxdy - \int_{\B} F(u)\ dx,
$$
where $F(t) = \int_{0}^tf(s)\ ds.$ Moreover, $J$ is of class $C^2$, and thus the behaviour of $J$ near a critical point $u$ is closely 
related to the Morse index $m(u)$. Typically, critical points detected via minimax principles lead to bounds on the Morse index. In combination with Theorem~\ref{T}, this allows to show the non-radiality of certain classes of sign changing critical points. In this spirit, it is proved in \cite{aftalion2004qualitative} that, under suitable additional assumptions on $f$, least energy sign changing solutions of the local problem (\ref{eq01}) are non-radial functions. 

With regard to the existence of least energy sign changing solutions of the nonlocal problem~\eqref{eq1}, we refer to the recent paper \cite{teng-wang-wang}. For existence results for sign changing solutions to related nonlocal problems, see e.g. \cite{wang2016radial,luo2018sign} and the references therein.\\ 

The paper is organized as follows. In Section \ref{section2} we introduce preliminary notions and collect preliminary results on function spaces. In Section \ref{section3}, we investigate radial solutions of (\ref{eq1}) and properties of their partial derivatives. In Section \ref{section4}  we complete the proof of Theorem  \ref{T}. Finally, in Section~\ref{alternative}, we complete the proof of Theorem~\ref{symmetry-of-higher-eigenfunctions}.\\
 
 \textbf{Acknowledgements:} This work is supported by DAAD and BMBF (Germany) within the project 57385104. Mouhamed Moustapha Fall's work is also supported by the Alexander von Humboldt foundation. The authors would like to thank Xavier Ros-Oton and Sven Jarohs for helpful discussions. Moreover, they would like to thank the referee for valuable comments and suggestions.

\section{Preliminary definitions and results}\label{section2}%%%%%%%%%%%%%%%%%%%%%%%%%%%%%

In this section, we introduce some notation and state preliminary results to be used throughout this paper.

We first introduce and recall some notation related to sets and functions. If $\Omega_1, \Omega_2 \subset \R^N$ are open subsets, we write $\Omega_1 \subset \subset \Omega_2$ if $\overline{\Omega}_1$ is compact and contained in $\Omega_2$. We denote by $1_U:\R^N\rightarrow\R$ the characteristic function of a subset $U\subset\R^N.$ For a function $u:\R^N\rightarrow\R $, we use $u^+:=\max\{u,0\} $ and $u^-:=-\min\{u,0\}$ to denote the positive and negative part of $u$, respectively. 

Next we recall some notation related to function spaces associated with the fractional power $s \in (0,1)$. We consider the space 
\begin{equation}
\cL^1_s:=\bigg\{u\in L^1_{loc}(\R^N):\|u\|_{\cL^1_s}<\infty\bigg\}, \ \ \ \text{where}\ \ \ \|u\|_{\cL^1_s}:=\int_{\R^N}\frac{|u(x)|}{1+|x|^{N+2s}}\ dx.
\end{equation}
If $w \in \cL^1_s$, then $(-\Delta)^s w$ is well defined as a distribution on $\R^N$ by setting 
$$
[(-\Delta)^s w](\phi) = \int_{\R^N} w (-\Delta)^s \phi\,dx \qquad \text{for $\phi \in \cC^\infty_c(\R^N)$.}
$$
Here and in the following, for an open subset $\Omega \subset \R^N$, we denote by $\cC^\infty_c(\Omega)$ the space of smooth functions on $\R^N$ with compact support in $\Omega$. We recall a maximum principle for the fractional Laplacian 
in distributional sense due to Silvestre. 

\begin{prop}\cite[Proposition 2.17]{Ls07}\label{prop1}
	Let $\Omega\subset \R^N$ be an open bounded set, and let $w \in \cL^1_s$ be a lower-semicontinuous function in $\overline{\Omega}$ such that $w\geq0$ in $\R^N\setminus\Omega$ and $(-\Delta)^sw\geq0$ in $\Omega$ in distributional sense, i.e., 
$$
\int_{\R^N}w (-\Delta)^s \phi\,dx \ge 0 \qquad \text{for all nonnegative functions $\phi \in \cC^\infty_c(\Omega)$.}
$$

Then $w\geq0$ in $\R^N.$
\end{prop}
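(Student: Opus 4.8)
The plan is to argue by contradiction and to convert the distributional differential inequality into a pointwise one by mollification, after which the conclusion follows from the elementary fact that a smooth function cannot satisfy $(-\Delta)^sw\geq 0$ at an interior point where it attains a global minimum (unless it is constant).

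\emph{Step 1 (set-up of the contradiction).} Suppose the conclusion fails. Since $\overline{\Omega}$ is compact and $w$ is lower semicontinuous on $\overline{\Omega}$, while $w\geq 0$ on $\R^N\setminus\Omega\supseteq\partial\Omega$ (as $\Omega$ is open), the number $m:=\inf_{\R^N}w$ is negative, equals $\min_{\overline{\Omega}}w$, and is attained at some $x_0\in\Omega$. Fix $\rho>0$ with $\overline{B_{2\rho}(x_0)}\subset\Omega$.

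\emph{Step 2 (mollification).} Let $(\eta_\eps)_{\eps>0}\subset\cC^\infty_c(\R^N)$ be a standard radially symmetric mollifier with $\supp\eta_\eps\subset B_\eps$, and set $w_\eps:=w\ast\eta_\eps$. Since $w\in\cL^1_s$, one checks $w_\eps\in C^\infty(\R^N)\cap\cL^1_s$, so $(-\Delta)^sw_\eps$ is defined pointwise; moreover $w_\eps\geq m$ on $\R^N$, and $w_\eps\geq 0$ on $\{x:\dist(x,\Omega)\geq\eps\}$ because there $B_\eps(x)\cap\Omega=\emptyset$. For $x\in\Omega_\eps:=\{x\in\Omega:\dist(x,\partial\Omega)>\eps\}$ the function $y\mapsto\eta_\eps(x-y)$ is a nonnegative element of $\cC^\infty_c(\Omega)$; using the self-adjointness relation $(-\Delta)^s_y[\eta_\eps(x-y)]=[(-\Delta)^s\eta_\eps](x-y)$, Fubini's theorem, and the distributional hypothesis applied to this test function, we obtain
\[
(-\Delta)^sw_\eps(x)=\big((-\Delta)^s\eta_\eps\ast w\big)(x)=\int_{\R^N}w(y)\,[(-\Delta)^s\eta_\eps](x-y)\,dy\geq 0 ,
\]
so $(-\Delta)^sw_\eps\geq 0$ pointwise on $\Omega_\eps$.

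\emph{Step 3 (pointwise minimum principle and conclusion).} Fix $\eps\in(0,\rho)$. As $w_\eps$ is continuous and nonnegative outside the bounded set $\{\dist(\cdot,\Omega)<\eps\}$, it attains its global minimum $\mu_\eps:=\min_{\R^N}w_\eps\geq m$ at some point $y_\eps$. Exploiting the lower semicontinuity of $w$ up to $\partial\Omega$ together with $\dist(x_0,\partial\Omega)\geq 2\rho>0$, one shows that for all small $\eps$ one has $y_\eps\in\Omega_\eps$ and $\mu_\eps<0$: otherwise a subsequence of the $y_\eps$ would converge to some $y_\ast\in\partial\Omega$ along which $w_\eps(y_\eps)\to m<0$, which forces $\liminf_{z\to y_\ast}w(z)\leq m<0$ and hence, by lower semicontinuity, $w(y_\ast)<0$, contradicting $w\geq 0$ on $\R^N\setminus\Omega$. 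For such an $\eps$, since $y_\eps$ is a global minimum of $w_\eps$ we have $2w_\eps(y_\eps)-w_\eps(y_\eps+z)-w_\eps(y_\eps-z)\leq 0$ for every $z$, whence
\[
(-\Delta)^sw_\eps(y_\eps)=\frac{c(N,s)}{2}\int_{\R^N}\frac{2w_\eps(y_\eps)-w_\eps(y_\eps+z)-w_\eps(y_\eps-z)}{|z|^{N+2s}}\,dz\leq 0 ,
\]
and equality would force $w_\eps\equiv\mu_\eps<0$ a.e., which is impossible since $w_\eps\geq 0$ near infinity. Thus $(-\Delta)^sw_\eps(y_\eps)<0$, contradicting $(-\Delta)^sw_\eps\geq 0$ on $\Omega_\eps$. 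This contradiction shows $w\geq 0$ on $\R^N$.

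The main obstacle is Step 3: controlling the location of the minimum of the regularization $w_\eps$, i.e.\ ruling out that it slides into the $\eps$-collar around $\partial\Omega$ where the pointwise supersolution inequality is unavailable; the fixed positive distance from $x_0$ to $\partial\Omega$ together with lower semicontinuity of $w$ up to the boundary is what pins the approximate minimizers near $x_0$ as $\eps\to 0$. (If $w$ is continuous, or if one only asks $w\geq 0$ a.e., this step simplifies, since then $w_\eps(x_0)\to m$ directly.)
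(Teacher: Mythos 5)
The paper does not prove this proposition; it is quoted verbatim from Silvestre \cite{Ls07}, so there is no in-paper argument to compare against. Your overall strategy --- mollify the distributional supersolution to get a classical supersolution on $\Omega_\eps$ and then evaluate $(-\Delta)^s w_\eps$ at a global minimum point --- is a legitimate and standard route, and Step 2 is correct: for $x\in\Omega_\eps$ the test function $y\mapsto\eta_\eps(x-y)$ is admissible, and the identity $(-\Delta)^s w_\eps=((-\Delta)^s\eta_\eps)\ast w$ gives $(-\Delta)^s w_\eps\ge 0$ pointwise there. The final computation at a global minimum, including the rigidity statement in the equality case, is also fine.

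The genuine gap is in Step 3, at the assertion that $\mu_\eps<0$ for small $\eps$. Mollification only sees $w$ up to Lebesgue-null sets, whereas the hypothesis obtained in Step 1, namely $w(x_0)=m<0$, concerns a single pointwise value of the lower semicontinuous representative. Lower semicontinuity gives $\liminf_{y\to x_0}w(y)\ge w(x_0)$, which is the inequality in the \emph{wrong} direction: it does not force $w$ to be negative on a set of positive measure near $x_0$ (an lsc function can be negative at the single point $x_0$ and vanish a.e., in which case $w_\eps\equiv 0$ and $\mu_\eps=0$ for every $\eps$). So all one knows is $\mu_\eps\ge m$, and your stated justification ("$w_\eps(y_\eps)\to m$") has no basis. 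A second, related issue: to rule out that $y_{\eps_k}$ drifts to $\partial\Omega$ you need a uniform bound $\mu_{\eps_k}\le -c<0$; with only $\mu_{\eps_k}<0$ the limiting argument yields $w(y_*)\le 0$, which does not contradict $w\ge 0$ on $\R^N\setminus\Omega$. What your argument does prove is the essential-infimum version: if $\essinf_{\Omega}w<0$, then $\mu_\eps\le\essinf_\Omega w/2<0$ uniformly for small $\eps$, both difficulties disappear, and the contradiction follows; hence $w\ge 0$ a.e. Upgrading this to the stated pointwise conclusion for a general lsc representative needs an extra ingredient linking pointwise values of $w$ to its essential liminf, which is missing. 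In the only place the present paper invokes the proposition --- the function $V$ in the proof of Lemma~\ref{l3-new}(ii) --- $w$ is continuous on $\R^N$, and there your proof closes immediately, since $w_\eps(x_0)\to w(x_0)=m<0$ supplies the missing uniform negativity of $\mu_\eps$.
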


For an open subset $\Omega \subset \R^N$, we now consider the fractional Sobolev space 
\begin{equation}\label{wsp-rn}
H^{s}(\Omega)=\Bigg\{u\in L^{2}(\Omega)\;:\; \int_{\Omega}\int_{\Omega}\frac{|u(x)-u(y)|^2}{|x-y|^{N+2s}}\ dxdy<\infty\Bigg\}.
\end{equation}
Setting 
$$
[u]_{s,\Omega}:= \Bigl(\frac{1}{2}\int_{\Omega}\int_{\Omega}\frac{|u(x)-u(y)|^2}{|x-y|^{N+2s}}\ dxdy\Bigr)^{\frac{1}{2}} \qquad \text{for $u \in H^s(\Omega)$,}
$$
we note that $H^s(\Omega)$ is a Hilbert space whose norm can be written as 
\begin{equation}\label{N1}
\displaystyle\|u\|_{H^s(\Omega)} = \Bigl(\|u\|^{2}_{L^2(\Omega)} + [u]_{s,\Omega}^2\Bigr)^{\frac{1}{2}} 
\end{equation}
We will also use the local fractional Sobolev space $H^s_{loc}(\Omega)$ defined as the space of functions 
$\psi \in L^2_{loc}(\Omega)$ with $\psi\in H^s(\Omega')$ for every $\Omega'\subset \subset \Omega$. 

For a {\em bounded} open subset $\Omega \subset \R^N$, we let $\cH^s_0(\Omega)$ denote the closure of $C^\infty_c(\Omega)$ in $H^s(\R^N)$. Then $\cH^s_0(\Omega)$ is a Hilbert space with scalar product 
\begin{equation}\nonumber
(u,v)\mapsto \cE_s(u,v) :=\langle u,v\rangle_{\cH^s_0(\Omega)}= \frac{c(N,s)}{2}\displaystyle\int_{\R^N}\int_{\R^N}\frac{\big(u(x)-u(y)\big)\big(v(x)-v(y)\big)}{|x-y|^{N+2s}}\ dxdy
\end{equation}
and corresponding norm 
$$
\|u\|_{\cH^{s}_{0}(\Omega)} = \sqrt{\cE_s(u,u)}=  \sqrt{c(N,s)} [u]_{s,\R^N}. 
$$
This is a consequence of the fact that 
$$
\inf \{\cE_s(u,u) \::\: u \in \cH^s_0(\Omega),\: \|u\|_{L^2(\Omega)} = 1\} \;>\;0,
$$
which in turn follows from the fractional Sobolev inequality (see e.g. \cite[Theorem 6.5]{di2012hitchhiker}) and the boundedness of $\Omega$. In particular, $\cH^s_0(\Omega)$ embeds into $L^2(\Omega)$. We also note that, by definition,
\begin{equation}
\label{inclusion-property}
\cH^s_0(\tilde \Omega) \subset \cH^s_0(\Omega) \qquad \text{for bounded open sets $\Omega, \tilde \Omega$ with $\tilde \Omega \subset \Omega$.}
\end{equation}
We also recall the following property, see e.g. \cite[Theorem 1.4.2.2]{G11}: 
\begin{equation}
  \label{eq:density}
  \begin{aligned}
  &\text{\em For any bounded domain $\Omega$ with continuous boundary,}\\
 &\text{\em we have $\cH^{s}_{0}(\Omega)=\{u\in H^{s}(\R^N) : u\equiv 0 ~\text{ on\ \ $\R^{N}\setminus \Omega$}\}.$}  
  \end{aligned}
\end{equation}
Consequently, the definition of $\cH^{s}_{0}(\Omega)$ is consistent with (\ref{wsp-d-special-case}). 

For the remainder of this section, we fix a bounded open subset $\Omega \subset \R^N$. The following lemma is known, but we include a short proof for the convenience of the reader.

\begin{lemma}
\label{compact-support-H-s-0}
Let $\phi \in H_{loc}^s(\Omega)$ be compactly supported in $\Omega$. Then $\phi\in \H^s_{0}(\Omega)$.
\end{lemma}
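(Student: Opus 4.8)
The plan is to argue in two steps: first upgrade $\phi$ from $H^s_{loc}(\Omega)$ to $H^s(\R^N)$, and then approximate it in the $H^s(\R^N)$-norm by functions of $C^\infty_c(\Omega)$, which by definition of $\H^s_0(\Omega)$ gives the claim.

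For the first step, set $K:=\supp\phi$, which is a compact subset of $\Omega$, fix $\delta>0$ with $\dist(K,\R^N\setminus\Omega)>2\delta$, and put $\Omega':=\{x\in\R^N:\dist(x,K)<\delta\}$, so that $K\subset\subset\Omega'\subset\subset\Omega$ and hence $\phi\in H^s(\Omega')$. I would then split the double Gagliardo integral defining $[\phi]_{s,\R^N}^2$ according to whether $(x,y)\in\Omega'\times\Omega'$ or not. The part over $\Omega'\times\Omega'$ is finite since $\phi\in H^s(\Omega')$. On the complement, since $\phi$ vanishes outside $K\subset\Omega'$, the integrand vanishes unless one variable, say $x$, lies in $K$ while the other lies in $\R^N\setminus\Omega'$, in which case $|x-y|\ge\delta$ and the integrand is at most $|\phi(x)|^2\delta^{-N-2s}$; integrating, this part is bounded by $C(N,s,\delta)\|\phi\|_{L^2(K)}^2$, and the symmetric term is handled the same way. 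Together with $\|\phi\|_{L^2(\R^N)}=\|\phi\|_{L^2(K)}<\infty$, this shows $\phi\in H^s(\R^N)$.

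For the second step I would mollify: with $(\rho_\eps)_{\eps>0}$ a standard family of mollifiers supported in $B_\eps(0)$, the functions $\phi_\eps:=\rho_\eps*\phi$ are smooth and supported in the $\eps$-neighbourhood of $K$, hence belong to $C^\infty_c(\Omega)$ for $\eps<\delta$. It then remains to check $\phi_\eps\to\phi$ in $H^s(\R^N)$; the $L^2$-convergence is classical, and for the seminorm one writes $\phi_\eps-\phi=\int\rho_\eps(z)\bigl(\phi(\cdot-z)-\phi\bigr)\,dz$ and uses Minkowski's integral inequality together with the translation invariance of $[\,\cdot\,]_{s,\R^N}$ to reduce matters to the continuity of translations in $H^s(\R^N)$. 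Since $\phi_\eps\in C^\infty_c(\Omega)$, this gives $\phi\in\H^s_0(\Omega)$. (Alternatively, once $\phi\in H^s(\R^N)$ is known, one can shortcut the second step by choosing $\Omega'$ above with continuous boundary, e.g. a smooth open set with $K\subset\subset\Omega'\subset\subset\Omega$; since $\phi\equiv0$ on $\R^N\setminus\Omega'$, property (\ref{eq:density}) gives $\phi\in\H^s_0(\Omega')$, and then $\H^s_0(\Omega')\subset\H^s_0(\Omega)$ by (\ref{inclusion-property}).)

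I expect the only real obstacle to be the first step: the hypothesis provides merely interior fractional regularity, so one must exploit the positive distance between $\supp\phi$ and $\partial\Omega$ in order to control the non-local part of the Gagliardo energy coupling $K$ with the region far from $\Omega$ — this is precisely where compactness of the support enters. The second step is routine, modulo the standard fact that mollifications converge in the full-space fractional Sobolev norm.
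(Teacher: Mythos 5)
Your argument is correct, and its first step coincides with the paper's proof: both split the Gagliardo double integral into the part over $\Omega'\times\Omega'$, controlled by $[\phi]_{s,\Omega'}<\infty$, and the nonlocal remainder, which is finite precisely because $\dist(K,\R^N\setminus\Omega')>0$. Where you diverge is in how you pass from ``$\phi\in H^s(\R^N)$ and $\phi\equiv 0$ outside a compact subset of $\Omega$'' to membership in $\cH^s_0(\Omega)$. The paper invokes the characterization (\ref{eq:density}) (Grisvard's theorem) for a domain with continuous boundary, together with the monotonicity (\ref{inclusion-property}); this is short but rests on a cited, nontrivial density result. Your primary route instead mollifies directly, producing $\phi_\eps=\rho_\eps*\phi\in C^\infty_c(\Omega)$ and checking convergence in $H^s(\R^N)$ via Minkowski's integral inequality and continuity of translations (itself easiest to justify through the Fourier-side description of the norm). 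This is more self-contained and avoids the external citation, at the cost of a routine but nonempty verification of the mollification convergence in the fractional seminorm; your parenthetical alternative at the end is exactly the paper's argument. Both routes are valid.
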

Here and in the following, we identify $\phi$ with its trivial extension to $\R^N$. 
\begin{proof} 
Without loss of generality, we may assume that $\Omega$ has a continuous boundary, since otherwise we may use (\ref{inclusion-property}) after replacing $\Omega$ by a bounded open subset $\tilde \Omega$ with continuous boundary containing the support of $\phi$. 

Let $\Omega'\subset\subset\Omega$ be an open subset of $\Omega$ which contains the support $K$ of $\phi$. Then we have 
	\begin{equation}\label{ehloc}
\begin{split}
\frac{1}{2}\displaystyle\int_{\R^N}\int_{\R^N}\frac{|\phi(x)-\phi(y)|^2}{|x-y|^{N+2s}}\ dxdy  = [\phi]_{s,\Omega'}^2 + \displaystyle\int_{\Omega'}\int_{\R^N\setminus\Omega'}\frac{|\phi(x)|^2}{|x-y|^{N+2s}}\ dydx,  
\end{split}		
	\end{equation}
where $[\phi]_{s,\Omega'}^2<\infty$ since $\phi\in H^s_{loc}(\Omega)$. Moreover,
\begin{align*}
\int_{\Omega'}\int_{\R^N\setminus\Omega'}\frac{|\phi(x)|^2}{|x-y|^{N+2s}}\ dydx  &= \displaystyle\int_{K}|\phi(x)|^2\ \int_{\R^N\setminus\Omega'}\frac{dy}{|x-y|^{N+2s}}dx\\
&\le \|\phi\|^2_{L^2(K)} \sup_{x \in K}\int_{\R^N\setminus\Omega'}\frac{dy}{|x-y|^{N+2s}} <\infty
\end{align*}
since $\dist(K,\R^N \setminus \Omega')>0$. Since $\Omega$ has a continuous boundary and $\phi\equiv0$ in $\R^N\setminus\Omega,$ we conclude that $\phi\in\cH^s_0(\Omega)$ as a consequence of (\ref{eq:density}).
\end{proof}

We also need the following lemma.

\begin{lemma}
\label{new-lemma-1}
Let $v \in\cL^1_s \cap H^s_{loc}(\Omega)$, and let $\phi \in H^s_{loc}(\Omega)$ be a function with compact support. Then the integral 
$$
\cE_s(v,\phi)= \frac{c(N,s)}{2}\displaystyle\int_{\R^N}\int_{\R^N}\frac{\big(v(x)-v(y)\big)\big(\phi(x)-\phi(y)\big)}{|x-y|^{N+2s}}\ dxdy
$$
is well defined in Lebesgue sense. More precisely, for any choice of open subsets 
$$
\Omega' \subset \subset \Omega'' \subset \subset \Omega
$$
with $\supp \,\phi \subset \Omega'$, there exist constants $c_1,c_2$ -- depending only on $\Omega', \Omega'', N~\text{and}~s$ but not on $v$ and $\phi$ ---such that 
\begin{align}
\frac{1}{2}\int_{\R^N}\int_{\R^N}&\frac{\big|v(x)-v(y)\big|\big|\phi(x)-\phi(y)\big|}{|x-y|^{N+2s}}\ dxdy \label{new-estimate}\\
&\qquad \qquad \le 
[v]_{s,\Omega''} [\phi]_{s,\Omega''} + c_1 \|v\|_{L^2(\Omega')} \|\phi\|_{L^2(\Omega')} + c_2 \|\phi\|_{L^1(\Omega')} \|v\|_{\cL^1_s}. \nonumber
\end{align}
\end{lemma}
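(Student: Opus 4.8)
The plan is to split the double integral over $\R^N \times \R^N$ into pieces according to the location of $x$ and $y$ relative to the nested open sets $\Omega' \subset\subset \Omega'' \subset\subset \Omega$, exploiting that $\phi$ is supported in $\Omega'$. First I would write $\R^N = \Omega'' \cup (\R^N \setminus \Omega'')$ for each variable and expand the product, obtaining four regions: (i) $x,y \in \Omega''$; (ii) $x \in \Omega''$, $y \notin \Omega''$; (iii) $x \notin \Omega''$, $y \in \Omega''$; (iv) $x,y \notin \Omega''$. Region (iv) contributes nothing: since $\supp\phi \subset \Omega' \subset \Omega''$, both $\phi(x)$ and $\phi(y)$ vanish there, so $\phi(x)-\phi(y) = 0$. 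Regions (ii) and (iii) are symmetric, so it suffices to estimate one and double it.

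For region (i), I would apply the Cauchy--Schwarz inequality in the measure $\frac{dx\,dy}{|x-y|^{N+2s}}$ on $\Omega'' \times \Omega''$, which bounds that piece by $[v]_{s,\Omega''}[\phi]_{s,\Omega''}$ — this gives exactly the first term on the right-hand side of \eqref{new-estimate}. For region (ii), with $x \in \Omega''$ and $y \in \R^N \setminus \Omega''$, I use $|\phi(x)-\phi(y)| = |\phi(x)| \le |\phi(x)| 1_{\Omega'}(x)$ and then split $\Omega''$ further: the subregion where $x \in \Omega'$ contributes both a "$|\phi(x)||v(x)|$" term and a "$|\phi(x)||v(y)|$" term, while the subregion $x \in \Omega'' \setminus \Omega'$ contributes nothing since $\phi(x) = 0$ there. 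So effectively $x$ ranges only over $\Omega'$. The crucial geometric fact is that $\dist(\Omega', \R^N \setminus \Omega'') =: d_1 > 0$, so for $x \in \Omega'$ we have $\int_{\R^N \setminus \Omega''} \frac{dy}{|x-y|^{N+2s}} \le \int_{|z| \ge d_1} \frac{dz}{|z|^{N+2s}} =: C' < \infty$, a constant depending only on $\Omega', \Omega'', N, s$. This handles the "$|\phi(x)||v(x)|$" part: $\int_{\Omega'} |\phi(x)||v(x)| \big(\int_{\R^N\setminus\Omega''}\frac{dy}{|x-y|^{N+2s}}\big) dx \le C' \|\phi\|_{L^2(\Omega')}\|v\|_{L^2(\Omega')}$ by Cauchy--Schwarz, feeding into the $c_1$ term.

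For the remaining "$|\phi(x)||v(y)|$" part of region (ii), i.e. $\int_{\Omega'}\int_{\R^N\setminus\Omega''} \frac{|\phi(x)||v(y)|}{|x-y|^{N+2s}}\,dy\,dx$, I would bound it by pulling out $\|\phi\|_{L^1(\Omega')}$ and comparing the kernel with the weight defining $\|\cdot\|_{\cL^1_s}$. For $x \in \Omega'$ (which is bounded, say $\Omega' \subset B_R$) and $y \in \R^N \setminus \Omega''$, one has $|x-y| \ge d_1$ and also, for $|y|$ large, $|x-y| \ge |y| - R \gtrsim |y|$; combining these elementary estimates yields $\frac{1}{|x-y|^{N+2s}} \le C'' \frac{1}{1+|y|^{N+2s}}$ with $C''$ depending only on $R, d_1, N, s$, hence only on $\Omega', \Omega'', N, s$. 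Therefore this part is $\le C'' \|\phi\|_{L^1(\Omega')} \int_{\R^N} \frac{|v(y)|}{1+|y|^{N+2s}}\,dy = C'' \|\phi\|_{L^1(\Omega')}\|v\|_{\cL^1_s}$, contributing to the $c_2$ term. Assembling all pieces, with region (iii) doubling the region-(ii) bounds, gives \eqref{new-estimate} with explicit $c_1 = 2C'$ and $c_2 = 2C''$ (or the analogous constants after keeping track of the factor $\frac12$), and in particular shows the double integral is finite, so $\cE_s(v,\phi)$ is well defined in the Lebesgue sense. The only mildly delicate point — and the one I would be most careful about — is the comparison $\frac{1}{|x-y|^{N+2s}} \le \frac{C''}{1+|y|^{N+2s}}$, which requires handling separately the regime of bounded $|y|$ (where one uses $|x-y|\ge d_1$ and $1+|y|^{N+2s}$ is itself bounded) and the regime of large $|y|$ (where $|x-y|\ge c|y|$); everything else is a routine application of Cauchy--Schwarz and the triangle inequality.
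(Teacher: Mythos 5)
Your proof is correct and follows essentially the same route as the paper: Cauchy--Schwarz on the diagonal block $\Omega''\times\Omega''$, the triangle inequality $|v(x)-v(y)|\le |v(x)|+|v(y)|$ on the off-diagonal block where $x\in\Omega'$ and $y\in\R^N\setminus\Omega''$, and the two geometric facts that $\kappa_{\Omega''}(x)=\int_{\R^N\setminus\Omega''}|x-y|^{-N-2s}\,dy$ is bounded on $\Omega'$ and that $|x-y|^{-N-2s}\lesssim (1+|y|)^{-N-2s}$ uniformly there. The only difference is bookkeeping: you enumerate four regions and double the symmetric ones, whereas the paper absorbs that factor of $2$ against the leading $\tfrac12$ directly.
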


\begin{proof}
 We put $\k(z)= |z|^{-N-2s}$. Since $\supp \,\phi \subset \Omega'$, we see that
	\[\begin{split}
	&\frac{1}{2}\int_{\R^N} \int_{\R^N} |v(x)-v(y)||\phi(x)-\phi(y)|\k(x-y)\ dxdy =\\
&\frac{1}{2}\int_{\Omega''} \int_{\Omega''} \frac{\big|v(x)-v(y)\big|\big|\phi(x)-\phi(y)\big|}{|x-y|^{N+2s}}\ dxdy + \int_{\Omega'} \int_{\R^N \setminus \Omega''} \frac{\big|v(x)-v(y)\big|\big|\phi(x)\big|}{|x-y|^{N+2s}}\ dydx\\
&\leq [v]_{s,\Omega''} [\phi]_{s,\Omega''} +
\int_{\Omega'} |\phi(x)| \int_{\R^N\setminus \Omega''} |v(x)-v(y)| \k(x-y)\ dydx,
	\end{split}\]
where
\begin{align*}
\int_{\Omega'} |\phi(x)| &\int_{\R^N\setminus \Omega''} |v(x)-v(y)|\k(x-y)\ dydx\\
&\le \int_{\Omega'}|\phi(x)| |v(x)| \kappa_{\Omega''}(x)\,dx + \int_{\Omega'}
|\phi(x)| \int_{\R^N\setminus \Omega''} |v(y)|\k(x-y) dydx \\
&\le c_1 \|\phi \|_{L^2(\Omega')} \|v\|_{L^2(\Omega')} + c_2 \|\phi\|_{L^1(\Omega')} \|v\|_{\cL^1_s}
\end{align*}
with
$$
\kappa_{\Omega''}(x) = \int_{\R^N \setminus \Omega''}\k(x-y)\ dy,~~~\text{~}~~x\in\Omega' 
$$
and
$$
c_1:= \sup_{x \in \Omega'} \kappa_{\Omega''}(x),  \qquad \quad c_2: = \sup_{x \in \Omega',y \in \R^N \setminus \Omega''} \k(x-y)(1+|y|)^{N+2s}.
$$
Note that the values $c_1$ and $c_2$ are finite since $\Omega' \subset  \subset \Omega''$. It thus follows that $\cE_s(u,v)$ is well-defined in Lebesgue sense and that (\ref{new-estimate}) holds. 
\end{proof}

\begin{cor}
\label{New-cor}
Let $v \in \cL^1_s \cap H^s_{loc}(\Omega)$. If $\Omega' \subset \subset \Omega$ and $(\phi_n)_n$ is a sequence in $H^s_{loc}(\Omega)$ with $\supp\, \phi,\: \supp\, \phi_n \subset \Omega'$ for all $n \in \N$ and $\phi_n \to \phi$ in $H^s_{loc}(\Omega)$, then we have 
$$
\cE_s(v,\phi_n) \to \cE_s(v,\phi) \qquad \text{as $n \to \infty$.}
$$
\end{cor}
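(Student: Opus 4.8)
The goal is to pass to the limit inside the bilinear form $\cE_s(v,\cdot)$ along a sequence $\phi_n\to\phi$ that is supported in a fixed $\Omega'\subset\subset\Omega$ and converges in $H^s_{loc}(\Omega)$. Since $\cE_s(v,\cdot)$ is linear in its second argument (at least once we know both $\cE_s(v,\phi_n)$ and $\cE_s(v,\phi)$ are well defined in Lebesgue sense, which is exactly the content of Lemma~\ref{new-lemma-1}), the statement $\cE_s(v,\phi_n)\to\cE_s(v,\phi)$ is equivalent to $\cE_s(v,\phi_n-\phi)\to 0$. So the first step is to write $\psi_n:=\phi_n-\phi$, note that $\supp\,\psi_n\subset\Omega'$ for all $n$ and that $\psi_n\to 0$ in $H^s_{loc}(\Omega)$.

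\textbf{Using the estimate of Lemma~\ref{new-lemma-1}.} The key point is simply to apply inequality (\ref{new-estimate}) to the pair $(v,\psi_n)$. Fix open sets $\Omega'\subset\subset\Omega''\subset\subset\Omega$ with $\supp\,\psi_n\subset\Omega'$ (the same $\Omega'$ for all $n$, which is allowed since the supports are contained in a common $\Omega'$). Then Lemma~\ref{new-lemma-1} provides constants $c_1,c_2$ depending only on $\Omega',\Omega'',N,s$ — crucially \emph{not} on $n$ — such that
$$
|\cE_s(v,\psi_n)| \le \frac{c(N,s)}{2}\int_{\R^N}\int_{\R^N}\frac{|v(x)-v(y)||\psi_n(x)-\psi_n(y)|}{|x-y|^{N+2s}}\,dxdy \le c(N,s)\Bigl([v]_{s,\Omega''}[\psi_n]_{s,\Omega''} + c_1\|v\|_{L^2(\Omega')}\|\psi_n\|_{L^2(\Omega')} + c_2\|\psi_n\|_{L^1(\Omega')}\|v\|_{\cL^1_s}\Bigr).
$$
Now $v\in\cL^1_s\cap H^s_{loc}(\Omega)$ is fixed, so $[v]_{s,\Omega''}$, $\|v\|_{L^2(\Omega')}$ and $\|v\|_{\cL^1_s}$ are all finite constants. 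On the other hand $\psi_n\to 0$ in $H^s_{loc}(\Omega)$ means in particular $\psi_n\to 0$ in $H^s(\Omega'')$, hence $[\psi_n]_{s,\Omega''}\to 0$ and $\|\psi_n\|_{L^2(\Omega'')}\to 0$; since $\Omega'\subset\Omega''$ this gives $\|\psi_n\|_{L^2(\Omega')}\to 0$, and by Cauchy--Schwarz on the bounded set $\Omega'$ also $\|\psi_n\|_{L^1(\Omega')}\le |\Omega'|^{1/2}\|\psi_n\|_{L^2(\Omega')}\to 0$. Therefore every term on the right-hand side tends to $0$, so $\cE_s(v,\psi_n)\to 0$, which is the claim.

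\textbf{Main obstacle.} There is essentially no obstacle here: the corollary is an immediate consequence of the uniform (in $n$) bilinear estimate (\ref{new-estimate}) together with the $H^s_{loc}$-convergence of the test functions. The only mild point requiring care is to make sure that the auxiliary domains $\Omega'\subset\subset\Omega''$ used in applying Lemma~\ref{new-lemma-1} can be chosen once and for all, independently of $n$ — this is guaranteed by the hypothesis that all the supports $\supp\,\phi,\supp\,\phi_n$ lie in the single set $\Omega'\subset\subset\Omega$, and one then just picks any $\Omega''$ with $\Omega'\subset\subset\Omega''\subset\subset\Omega$. With that fixed, linearity of $\cE_s(v,\cdot)$ and the vanishing of the three norms $[\psi_n]_{s,\Omega''},\|\psi_n\|_{L^2(\Omega')},\|\psi_n\|_{L^1(\Omega')}$ finish the argument.
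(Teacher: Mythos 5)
Your argument is correct and coincides with the paper's own proof: both reduce to bounding $\cE_s(v,\phi_n-\phi)$ via the uniform estimate (\ref{new-estimate}) of Lemma~\ref{new-lemma-1} on fixed sets $\Omega'\subset\subset\Omega''\subset\subset\Omega$ and then invoke the embeddings $H^s_{loc}(\Omega)\hookrightarrow L^2_{loc}(\Omega)\hookrightarrow L^1_{loc}(\Omega)$ to send each term to zero. Your write-up is, if anything, slightly more careful in fixing $\Omega''$ independently of $n$ and in spelling out the Cauchy--Schwarz step for the $L^1$ norm.
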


\begin{proof}
By Lemma \ref{new-lemma-1}, 
\begin{align*}
&|\cE_s(v,\phi_n-\phi)|\le \\
&c(N,s) [v]_{s,\Omega'} [\phi_n-\phi]_{s,\Omega'}+ \textbf{C}_1 \|v\|_{L^2(\Omega')} \|\phi_n-\phi\|_{L^2(\Omega')}
 + \textbf{C}_2 \|\phi_n-\phi\|_{L^1(\Omega')} \|v\|_{\cL^1_s},
\end{align*}
where $\textbf{C}_1$ and $\textbf{C}_2$ are positive constants. Thanks to the embeddings  $H^s_{loc}(\Omega)\hookrightarrow  L^2_{loc}(\Omega) \hookrightarrow L^1_{loc}(\Omega)$, we conclude that $\cE_s(v,\phi_n-\phi) \rightarrow 0$ as $n\rightarrow\infty.$
 \end{proof}

\section{Properties of radial solutions and their partial derivatives}\label{section3}%%%%%%%%%%%%%%%%%%%%%%%%%%%%%

In the following, we restrict our attention to the case $\Omega=\B$ and to bounded weak solutions of equation (\ref{eq1}). Here and in the following, we 
fix a nonlinearity $f: \R \to \R$ of class $C^1$, and we call a function $u \in \cH_0^s(\B) \cap L^\infty(\B)$ a weak solution of (\ref{eq1}) if 
$$
\cE_s(u,\phi) = \int_{\B}f(u) \phi\,dx \qquad \text{for all $\phi \in \cH_0^s(\B)$.}
$$
We note the following regularity properties for weak solutions of (\ref{eq1}). For this we consider the distance function to the boundary 
$$ 
\delta: \overline \B \to \R, \qquad \delta(x)=\dist(x,\partial \B)= 1-|x|.
$$

\begin{prop}(cf. \cite{Ls07,Ros14,FS-2019,Gr14})\\
\label{reg-prop-solutions}
Let $u \in \cH_0^s(\B) \cap L^\infty(\B)$ be a weak solution of (\ref{eq1}). 
Then $u\in C^{2,s}_{loc}(\B)\cap C^s_0(\ov \B)$. Moreover, 
\begin{equation}
  \label{eq:def-psi}
\psi:=\frac{u}{\delta^s} \in C^{\alpha}(\ov\B)\qquad \text{for some $\alpha \in (0,1)$,}  
\end{equation}
and the following properties hold with some constant $c>0$: 
\begin{enumerate}
\item[(i)] $| \nabla u (x)  | \leq c \delta^{s-1}(x)$ for all $x\in \B$.
\item[(ii)] $| \nabla \psi (x)  | \leq c \delta^{\alpha-1}(x)$ for all $x\in \B$.
\item[(iii)] For every $x_0 \in \partial \B$, we have 
$\lim \limits_{x \to x_0}\delta^{1-s}(x) \partial_r\, u(x) = - s \psi(x_0)$, where $\partial_r u(x)= \nabla u(x) \cdot \frac{x}{|x|}$ denotes the radial derivative of $u$ at $x$.
\item[(iv)] If $s \in (\frac{1}{2},1)$, then $\psi \in C^1(\ov \B)$. 
\end{enumerate}
\end{prop}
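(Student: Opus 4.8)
The plan is to assemble the statement from known regularity results for the Dirichlet problem of the fractional Laplacian, treating the interior and boundary behavior separately and then refining. First I would bootstrap interior regularity: since $u \in \cH^s_0(\B)\cap L^\infty(\B)$ solves $(-\Delta)^s u = f(u)$ in $\B$, the right-hand side $f(u)$ lies in $L^\infty(\B)$ (because $f \in C^1$ and $u$ is bounded), so by the interior Schauder estimates of Ros-Oton--Serra (or Silvestre \cite{Ls07}) one gets $u \in C^{2s-\varepsilon}_{loc}$, hence $f(u) \in C^{\gamma}_{loc}$ for some $\gamma>0$ since $f$ is $C^1$, and a second application yields $u \in C^{2,s}_{loc}(\B)$ — iterating the Schauder scheme once the source is Hölder. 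For the global Hölder bound $u \in C^s_0(\ov\B)$ and the fundamental fact that $\psi = u/\delta^s \in C^\alpha(\ov\B)$ for some $\alpha\in(0,1)$, I would cite the fine boundary regularity theory: \cite{Ros14} for $u \in C^s(\ov\B)$ and \cite{FS-2019} (or again Ros-Oton--Serra) for the $C^\alpha$ regularity of the quotient $u/\delta^s$ up to the boundary. This is the content of \eqref{eq:def-psi}.

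Next, for the gradient estimates (i) and (ii): estimate (i) follows from the interior estimate $\|\nabla u\|_{L^\infty(B_{\delta(x)/2}(x))} \lesssim \delta(x)^{-1}\|u\|_{L^\infty(B_{\delta(x)}(x))} + \delta(x)^{s-1}$-type scaling, combined with $\|u\|_{L^\infty(B_{\delta(x)}(x))} \lesssim \delta(x)^s$ coming from $u\in C^s_0(\ov\B)$ with $u=0$ on $\partial\B$; a standard rescaling argument centered at $x$ with radius $\delta(x)$ converts the interior $C^1$ bound into the claimed $\delta^{s-1}$ growth. Estimate (ii) is analogous but uses $\psi \in C^\alpha(\ov\B)$ in place of $u \in C^s$: one rescales the equation for $u = \delta^s \psi$ on balls of radius $\delta(x)/2$ and uses that $\psi$ is Hölder up to the boundary, which is exactly the type of estimate established in \cite{FS-2019,Gr14}. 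I would present these as rescaled consequences of the cited interior/boundary Schauder theory rather than reproving them.

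For (iii), I would write $\partial_r u(x) = \partial_r(\delta^s \psi)(x) = s\,\delta^{s-1}(x)\,\partial_r\delta(x)\,\psi(x) + \delta^s(x)\,\partial_r\psi(x)$. Since $\delta(x) = 1-|x|$ we have $\partial_r \delta(x) = -1$, so $\delta^{1-s}(x)\partial_r u(x) = -s\psi(x) + \delta(x)\partial_r\psi(x)$. As $x \to x_0 \in \partial\B$, the first term tends to $-s\psi(x_0)$ by continuity of $\psi$ on $\ov\B$, and the second term is controlled by $\delta(x)\cdot|\nabla\psi(x)| \le c\,\delta^\alpha(x) \to 0$ using (ii); this gives the limit. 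Finally (iv): when $s \in (\tfrac12,1)$ the source $f(u)$ is bounded and the higher-order boundary regularity theory of Grubb \cite{Gr14} (the $\mu$-transmission / $a$-function regularity, \cite[Theorem 2.2]{Gr14}) applies to give $u/\delta^s \in C^1(\ov\B)$ — this is the threshold at which the quotient gains a full derivative up to the boundary.

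The main obstacle is not any single computation but marshalling the correct citations with the correct hypotheses: in particular, verifying that the available global regularity for $u/\delta^s$ (from \cite{FS-2019,Ros14,Gr14}) is stated for merely bounded weak solutions with $L^\infty$ source — which it is, since $f(u)\in L^\infty(\B)$ — and that the rescaled interior estimates used for (i) and (ii) are uniform in the distance to the boundary. Once these are in place, (iii) is an elementary consequence of the product rule as shown above, and (iv) is a direct invocation of Grubb's theorem in the range $s>\tfrac12$.
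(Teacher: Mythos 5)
Your proposal is correct and follows essentially the same route as the paper, which likewise assembles the statement from the cited regularity results: \cite{Ros14} for $u\in C^s_0(\ov\B)$, (i) and $\psi\in C^\alpha(\ov\B)$, \cite{FS-2019} for (ii)--(iii), interior Schauder theory for $u\in C^{2,s}_{loc}(\B)$, and \cite[Theorem 2.2]{Gr14} for (iv). The only difference is that you derive (iii) explicitly from (ii) and the product rule (a correct and self-contained argument), whereas the paper simply attributes (iii) to \cite{FS-2019}.
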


\begin{proof}
Since $u \in L^\infty(\B)$ and $f$ is of class $C^1$, we have $f(u(\cdot)) \in L^\infty(\B)$. Hence the regularity theory for the fractional Dirichlet-Possion problem developed in \cite{Ros14} shows that $u \in C^s_0(\B)$, and that (i) holds. It is also shown in \cite{Ros14} that $\psi:=\frac{u}{\delta^s} \in C^{\alpha}(\ov\B)$ for some $\alpha \in (0,1)$. Moreover, (ii) and (iii) are proved in \cite{FS-2019}.\\
Finally, noting that $f(u(\cdot)) \in C^s(\B)$ since $u \in C^s_0(\B)$, it follows from interior regularity (see e.g. \cite{Ls07}) that $u \in C^{2,s}_{loc}(\B)$. Moreover, if $s \in (\frac{1}{2},1)$ we have $\psi \in C^{2s}(\ov \B) \subset C^1(\ov \B)$ by \cite[Theorem 2.2]{Gr14}.
\end{proof}

The regularity estimates above allow to apply the following simple integration by parts formula to weak solutions of (\ref{eq1}).

\begin{lemma}
\label{lm-int-by-part}
Let $u\in C^0(\ov \B)\cap C^1_{loc}(\B)$ be a function satisfying $u \equiv 0$ on $\partial \B$ and $|\nabla u| \in L^1(\B)$. Then
\begin{equation}
  \label{eq:int-by-parts}
\int_{\B} (\partial_j u) \phi\ dx = -\int_{\B}  u \partial_j \phi\ dx \qquad \text{ for $\phi \in C^1(\overline \B)$, $j = 1,\dots,N$.}
\end{equation}
\end{lemma}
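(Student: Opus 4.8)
The plan is to prove the integration by parts formula \eqref{eq:int-by-parts} by a standard cutoff-and-limit argument, reducing to the classical divergence theorem on compactly contained subdomains and then letting the cutoff exhaust $\B$. The only subtlety is that $\nabla u$ is merely $L^1$ and may blow up near $\partial\B$, so we cannot integrate by parts directly on $\B$; we must control the boundary contribution using the hypothesis $u\equiv 0$ on $\partial\B$ together with $u\in C^0(\ov\B)$.

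First I would fix $\phi\in C^1(\ov\B)$ and $j\in\{1,\dots,N\}$, and choose a sequence of cutoff functions $\eta_k\in C^\infty_c(\B)$ with $0\le\eta_k\le 1$, $\eta_k\equiv 1$ on $\{|x|\le 1-\tfrac2k\}$, $\eta_k\equiv 0$ on $\{|x|\ge 1-\tfrac1k\}$, and $|\nabla\eta_k|\le Ck$ with support of $\nabla\eta_k$ contained in the annulus $A_k:=\{1-\tfrac2k\le|x|\le 1-\tfrac1k\}$. Since $\eta_k\phi\in C^1_c(\B)$ and $u\in C^1_{loc}(\B)$, the classical integration by parts on a slightly larger compactly contained ball gives
\begin{equation}\label{eq:ibp-cutoff}
\int_{\B}(\partial_j u)\,\eta_k\phi\,dx = -\int_{\B} u\,\partial_j(\eta_k\phi)\,dx = -\int_{\B} u\,\phi\,\partial_j\eta_k\,dx - \int_{\B} u\,\eta_k\,\partial_j\phi\,dx.
\end{equation}
Next I would pass to the limit $k\to\infty$ in each term. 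The left-hand side converges to $\int_{\B}(\partial_j u)\phi\,dx$ by dominated convergence, using $|\nabla u|\in L^1(\B)$ and $\|\phi\|_\infty<\infty$; likewise the last term on the right converges to $\int_{\B}u\,\partial_j\phi\,dx$ since $u\in L^\infty(\B)$ (indeed $C^0(\ov\B)$) and $\eta_k\to 1$ pointwise on $\B$.

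The main obstacle is showing that the error term $\int_{\B} u\,\phi\,\partial_j\eta_k\,dx\to 0$. Here I would estimate
\begin{equation}\label{eq:error-est}
\Bigl|\int_{\B} u\,\phi\,\partial_j\eta_k\,dx\Bigr| \le \|\phi\|_\infty\, Ck \int_{A_k}|u(x)|\,dx \le \|\phi\|_\infty\, Ck\,\Bigl(\sup_{x\in A_k}|u(x)|\Bigr)\,|A_k|.
\end{equation}
Since $u\in C^0(\ov\B)$ and $u\equiv 0$ on $\partial\B$, uniform continuity gives $\sup_{A_k}|u|=:\omega_k\to 0$ as $k\to\infty$. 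On the other hand $|A_k|\le C'/k$ for $N\ge 1$ (the annulus has width $1/k$ and bounded $(N-1)$-dimensional slices), so the right-hand side of \eqref{eq:error-est} is bounded by $C''\,\omega_k\to 0$. Combining this with the convergences above in \eqref{eq:ibp-cutoff} yields \eqref{eq:int-by-parts}. I would remark that the same argument applies verbatim with $\B$ replaced by any bounded $C^1$ domain, but since only the ball is needed here I would keep the statement as is; and I would note that Proposition~\ref{reg-prop-solutions} guarantees that weak solutions of \eqref{eq1} satisfy all the hypotheses of this lemma, via $u\in C^s_0(\ov\B)\cap C^1_{loc}(\B)$ and estimate (i), which gives $|\nabla u(x)|\le c\,\delta^{s-1}(x)\in L^1(\B)$.
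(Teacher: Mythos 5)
Your proof is correct and follows essentially the same strategy as the paper's: integrate by parts on an approximation of $\B$ and pass to the limit using $|\nabla u|\in L^1(\B)$ (dominated convergence for the bulk terms) and $u\in C^0(\ov\B)$ with $u\equiv 0$ on $\partial\B$ (to kill the remaining term). The only difference is cosmetic: the paper exhausts $\B$ by the balls $B_{1-\frac1n}(0)$ and lets the resulting surface integral over $\partial B_{1-\frac1n}(0)$ tend to zero, whereas you use smooth cutoffs and estimate the commutator term $\int_{\B}u\,\phi\,\partial_j\eta_k\,dx$ over a thin annulus; both arguments rest on exactly the same hypotheses.
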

\begin{proof}
Let $\phi \in C^1(\overline \B)$, and let $\Omega_n:=B_{1-\frac{1}{n}}(0) \subset \B$ for $n \in \N$. Then $u\in C^1(\overline{\Omega}_n)$ for $n \in \N$ since $u \in C^1_{loc}(\B)$. Integrating by parts over $\Omega_n$ and using a change of variables, we find that
$$
\int_{\Omega_n}\bigl((\partial_j u)\phi + u\partial_j\phi\bigr)\ dx = 
\int_{\partial\Omega_n} u\phi\nu_j\ d\sigma =(1-\frac{1}{n})^{N-1}\int_{\partial\B}u((1-\frac{1}{n})\sigma)\phi((1-\frac{1}{n})\sigma)\nu_j\ d\sigma,
$$
	where $\nu_j$ is the $j$-th component of the unit outward normal to $\partial\B$ at $x$. Since $u\in C^0(\overline{\B})$, $u=0$ on $\partial\B$, $\Omega_n\uparrow\B$ and $\phi\in C^{1}(\overline{\B})$, we can apply the Lebesgue dominated convergence theorem to both sides of the equation above to deduce \eqref{eq:int-by-parts}. 
\end{proof}

In the following, we fix a {\em radial} solution $u \in \cH^s_0(\B) \cap L^\infty(\B)$ of (\ref{eq1}), and we consider the function $\psi$ defined in (\ref{eq:def-psi}) which is also radial. Hence we write 
\begin{equation}
  \label{eq:def-psi-0}
\text{$\psi(x)= \psi_0(r)$ for $r = |x|$ with a function $\psi_0: [0,1] \to \R$}  
\end{equation}
which is of class $C^\alpha$ for some $\alpha>0$ by Proposition~\ref{reg-prop-solutions}. 
Moreover, by Proposition~\ref{reg-prop-solutions} we have 
\begin{equation}
  \label{eq:d-r-equation}
\psi_0(1) = \lim\limits_{|x|\rightarrow 1} 
\frac{u(|x|)}{(1-|x|)^s}= -\frac{1}{s}\lim\limits_{|x| \rightarrow1}(1-|x|)^{1-s} \partial_r\, u(x).  
\end{equation}
By the Pohozaev type identity given in \cite[Theorem 1.1]{Ros14-1}, this value also satisfies 
\begin{equation}
  \label{eq:pohozaev}
\psi_0^2(1) = \frac{1}{|S^{N-1}|\Gamma(1+s)^2} \int_{\B} \Bigl[(2s-N) u f(u) + 2N F (u)\Bigr]dx.
\end{equation}
Here $F: \R \to \R$ is given by $F(t) = \int_0^t f(\tau)\ d\tau$.\\ 

The aim of this section is to construct test functions related to partial derivatives of $u$, which allow to estimate Dirichlet eigenvalues of the 
linearized operator
\begin{equation}\label{O2}
L:= (-\Delta)^s - f'(u).
\end{equation}
For $j \in \{1,\dots,N\}$, we consider the partial derivatives of $u$ given by  
$$
v^j: \R^N \to \R, \qquad v^j(x)= 
\left\{
  \begin{aligned}
&\partial_j u (x)=\frac{\partial u}{\partial x_{j}}(x),&&\qquad x \in \B,\\
&0,&&\qquad x \in \R^N \setminus \B,    
  \end{aligned}
\right.
\qquad j=1,\dots,N.
$$
From Proposition~\ref{reg-prop-solutions}, it then follows that 
\begin{equation}
  \label{eq:prop-v-j}
v^j \in \cL^1_s \cap H^s_{loc}(\B) \qquad \text{for $j \in \{1,\dots,N\}$.}
\end{equation}
Hence  $\cE_s(v^j,\phi)$ is well defined for every $\phi \in \cH^s_0(\B)$ with compact support by Lemma~\ref{new-lemma-1}. We have the following key lemma.

\begin{lemma}\label{l2-new}
For any $j\in \{1, \dots, N\}$, we have $Lv^j =(-\Delta)^s v^j -f'(u)v^j =0$ in distributional sense in $\B$, i.e. 
\begin{equation}
  \label{eq:distributional-sense}
\int_{\B}v^j (-\Delta)^s \phi\ dx = \cE_s(v^j,\phi)=
 \int_{\B} f'(u)v^j \phi\ dx \qquad \text{for all $\phi \in \cC^\infty_c(\B)$.}
\end{equation}
Moreover, if $\phi \in \cH^s_0(\B)$ has compact support in $\B$, then we have 
\begin{equation}
  \label{eq:weak-formulation}
\cE_{s}(v^j,\phi) =  \int_{\B} f'(u)v^j \phi\ dx.
\end{equation}
Furthermore, if $v^j \in \cH^s_0(\B)$, then (\ref{eq:weak-formulation}) is true for all $\phi \in \cH^s_0(\B)$. 
\end{lemma}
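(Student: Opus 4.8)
The plan is to differentiate the equation $(-\Delta)^s u = f(u)$ with respect to $x_j$ while carrying out every manipulation against test functions supported in the interior of $\B$, so as to avoid the boundary singularity of $v^j$. First I would record the integration-by-parts identity underlying everything: since $u\in C^0(\overline\B)\cap C^1_{loc}(\B)$ vanishes on $\partial\B$ and $|\nabla u|\le c\,\delta^{s-1}$ with $s-1>-1$, we have $|\nabla u|\in L^1(\B)$, so Lemma~\ref{lm-int-by-part} gives
\[
\int_{\B}(\partial_j u)\,\eta\,dx \;=\; -\int_{\B} u\,\partial_j\eta\,dx \qquad \text{for every }\eta\in C^1(\overline\B).
\]
Applying this with $\eta=(-\Delta)^s\phi\in C^\infty(\R^N)\subset C^1(\overline\B)$ for $\phi\in\cC^\infty_c(\B)$, and using that $(-\Delta)^s$ commutes with $\partial_j$, that $\int_{\R^N} u(-\Delta)^s\psi\,dx=\cE_s(u,\psi)$ for $u\in H^s(\R^N)$ and $\psi\in\cC^\infty_c(\R^N)$, the weak formulation for $u$ with test function $\partial_j\phi$, and an ordinary interior integration by parts, I obtain
\begin{align*}
\int_{\R^N} v^j(-\Delta)^s\phi\,dx
&= \int_{\B}(\partial_j u)(-\Delta)^s\phi\,dx
= -\int_{\B} u\,\partial_j[(-\Delta)^s\phi]\,dx
= -\int_{\R^N} u\,(-\Delta)^s(\partial_j\phi)\,dx\\
&= -\cE_s(u,\partial_j\phi)
= -\int_{\B} f(u)\,\partial_j\phi\,dx
= \int_{\B}\partial_j[f(u)]\,\phi\,dx
= \int_{\B} f'(u)\,v^j\,\phi\,dx,
\end{align*}
the last integration by parts being legitimate because $\phi$ has compact support in $\B$ and $f(u)\in C^1_{loc}(\B)$, with $\partial_j[f(u)]=f'(u)\partial_j u$ holding classically in $\B$ since $u\in C^{2,s}_{loc}(\B)$ and $f\in C^1$.

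To finish the proof of (\ref{eq:distributional-sense}) I still have to identify $\int_{\R^N} v^j(-\Delta)^s\phi\,dx$ with $\cE_s(v^j,\phi)$ for $\phi\in\cC^\infty_c(\B)$. Since $v^j\in\cL^1_s\cap H^s_{loc}(\B)$, the double integral defining $\cE_s(v^j,\phi)$ converges absolutely by Lemma~\ref{new-lemma-1}; truncating the kernel to $\{|x-y|>\eps\}$, symmetrizing in $(x,y)$ so that the kernel acts only on the smooth factor $\phi$, and applying Fubini, I would rewrite $\cE_s(v^j,\phi)=\lim_{\eps\to0}\int_{\R^N} v^j(x)\,(-\Delta)^s_\eps\phi(x)\,dx$, where $(-\Delta)^s_\eps$ is the truncation of $(-\Delta)^s$ obtained by integrating over $\{|x-y|>\eps\}$. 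A standard second-difference estimate shows that $(-\Delta)^s_\eps\phi\to(-\Delta)^s\phi$ pointwise with a uniform-in-$\eps$ bound of order $(1+|x|)^{-N-2s}$, so dominated convergence (using $v^j\in\cL^1_s$) yields $\cE_s(v^j,\phi)=\int_{\R^N} v^j(-\Delta)^s\phi\,dx$. Combined with the computation above, this establishes (\ref{eq:distributional-sense}), i.e. $Lv^j=0$ in distributional sense in $\B$.

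For (\ref{eq:weak-formulation}) I would take $\phi\in\cH^s_0(\B)$ with compact support in $\B$, pick $\eta\in\cC^\infty_c(\B)$ with $\eta\equiv1$ on $\supp\phi$, and choose $\tilde\phi_n\in\cC^\infty_c(\B)$ with $\tilde\phi_n\to\phi$ in $H^s(\R^N)$. Then $\phi_n:=\eta\tilde\phi_n\in\cC^\infty_c(\B)$ have supports in the fixed compact set $\supp\eta$ and $\phi_n\to\eta\phi=\phi$ in $H^s(\R^N)$, because multiplication by a fixed $\cC^\infty_c$-function is bounded on $H^s(\R^N)$. By Corollary~\ref{New-cor}, $\cE_s(v^j,\phi_n)\to\cE_s(v^j,\phi)$; and $\int_\B f'(u)v^j\phi_n\,dx\to\int_\B f'(u)v^j\phi\,dx$ since $f'(u)\in L^\infty(\B)$, $v^j\in L^2(\supp\eta)$ (as $v^j\in H^s_{loc}(\B)$), and $\phi_n\to\phi$ in $L^2$. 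Passing to the limit in (\ref{eq:distributional-sense}) gives (\ref{eq:weak-formulation}). Finally, if $v^j\in\cH^s_0(\B)$, both sides of (\ref{eq:weak-formulation}) are bounded linear functionals of $\phi\in\cH^s_0(\B)$ --- the left-hand side by the Cauchy--Schwarz inequality for $\cE_s$, the right-hand side by $\|f'(u)\|_{L^\infty(\B)}\|v^j\|_{L^2(\B)}\|\phi\|_{L^2(\B)}$ and the embedding $\cH^s_0(\B)\hookrightarrow L^2(\B)$ --- so, since they agree on the dense subspace $\cC^\infty_c(\B)$, they agree on all of $\cH^s_0(\B)$.

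The main difficulty is the tension between regularity and non-locality: one cannot simply say ``$(-\Delta)^s v^j=\partial_j(-\Delta)^s u=\partial_j f(u)$'', because for $s\le\tfrac12$ the function $v^j$ need not even lie in $L^2(\R^N)$ near $\partial\B$, hence $v^j\notin H^s(\R^N)$ and the usual pairing identities fail globally. The argument circumvents this by restricting to compactly supported test functions, where everything is controlled by $v^j\in\cL^1_s\cap H^s_{loc}(\B)$, and by moving \emph{both} $\partial_j$ and $(-\Delta)^s$ onto the smooth test function --- which is exactly what Lemma~\ref{lm-int-by-part} (applied, crucially, to the \emph{non}-compactly-supported function $(-\Delta)^s\phi$) together with $|\nabla u|\in L^1(\B)$ make possible. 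I expect the symmetrization/Fubini identification of $\cE_s(v^j,\phi)$ with the distributional pairing to be the most technical step.
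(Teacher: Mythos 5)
Your proof is correct and follows essentially the same route as the paper's: integration by parts via Lemma~\ref{lm-int-by-part} applied to $(-\Delta)^s\phi$, commuting $\partial_j$ with $(-\Delta)^s$, the weak formulation of $u$ tested against $\partial_j\phi$, the truncation/Fubini identification of $\cE_s(v^j,\phi)$ with $\int v^j(-\Delta)^s\phi$, and then approximation via Corollary~\ref{New-cor} plus density. The only (immaterial) deviations are your cutoff-function approximation $\eta\tilde\phi_n$ where the paper invokes the definition of $\cH^s_0(\Omega')$ directly, and phrasing the final extension as equality of bounded linear functionals rather than an explicit limit.
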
 

\begin{proof} 
Since $u \in C^{2,s}_{loc}(\B)$ by Proposition~\ref{reg-prop-solutions}, we have $v^j \in C^{1,s}_{loc}(\B) \subset H^s_{loc}(\B)$. Let $\phi \in \cC^\infty_c(\B) \subset \cC^\infty_c(\R^N)$. Then 
$$
\partial_j \phi \in \cC^\infty_c(\B),\quad  (-\Delta)^s \phi \in C^\infty(\R^N),\qquad \text{and}\qquad \partial_j (-\Delta)^s \phi = (-\Delta)^s \partial_j \phi \quad \text{on $\R^N$.}
$$
Consequently, since $u$ satisfies the assumptions of Lemma~\ref{lm-int-by-part}, (\ref{eq:int-by-parts}) implies that  
\begin{align*}
\int_{\B}v^j(-\Delta)^s\phi\ dx&=-\int_{\B}u \partial_j (-\Delta)^s\phi\ dx= -\int_{\B}u  (-\Delta)^s\partial_j \phi\ dx\\
&= -\cE_s(u,\partial_j \phi) = -\int_{\B} f(u)\partial_j \phi\ dx = \int_{\B} \partial_j f(u) \phi\ dx = \int_{\B} f'(u)v^j \phi\ dx.
\end{align*}
Hence $v^j$ solves $Lv^j =(-\Delta)^s v^j -f'(u)v^j =0$ in distributional sense. Next we show that 
\begin{equation}
  \label{eq:next-step}
\cE_s(v^j,\phi) = \int_{\B}f'(u)v^j \phi\ dx \qquad \text{for all $\phi \in \cC^\infty_c(\B)$.}
\end{equation}
Since $v^j \in \cL^1_s \cap H^s_{loc}(\B)$, the integral 
$$
\int_{\R^N}\int_{\R^N}\frac{\big| v^j(x)-v^j(y)\big|\big |\phi(x)-\phi(y)\big|}{|x-y|^{N+2s}}\ dxdy
$$
exists by Lemma~\ref{new-lemma-1}, and therefore we have, by Lebesgue's Theorem, 
\begin{align*}
\cE_s(v^j,\phi)&= \frac{c(N,s)}{2} 
\lim_{\eps \to 0} \int_{\R^N} \int_{|x-y|\ge \eps}\frac{\big(v^j(x)-v^j(y)\big)\big(\phi(x)-\phi(y)\big)}{|x-y|^{N+2s}}\ dxdy\\
&=c(N,s) \lim_{\eps \to 0} \displaystyle \int_{\R^N} v^j(x) \int_{\R^N \setminus B_\eps(x)} 
 \frac{\phi(x)-\phi(y)}{|x-y|^{N+2s}}\ dy dx\\
&=c(N,s)  \displaystyle \int_{\R^N} v^j(x)\; \lim_{\eps \to 0} \int_{\R^N \setminus B_\eps(x)} 
 \frac{\phi(x)-\phi(y)}{|x-y|^{N+2s}}\ dy dx\\
&=\int_{\R^N} v^j (-\Delta)^s \phi\ dx = \int_{\B} v^j (-\Delta)^s \phi\ dx = \int_{\B} f'(u)v^j \phi \ dx.
\end{align*}
Next, let $\phi \in \cH^s_0(\B)$ with compact support in $\B$, and choose an open subset $\Omega' \subset \subset \B$ such that 
$\supp\, \phi \subset \Omega'$. By definition of $\cH^s_0(\Omega')$, there exists a sequence $(\phi_n)_n$ in $\cC^\infty_c(\Omega') \subset \cC^\infty_c(\B)$ 
with $\phi_n\rightarrow\phi~~\text{in}~~\cH^s_0(\Omega')$, hence also $\phi_n\rightarrow\phi~~\text{in}~~\cH^s_0(\B)$.  Then Corollary~\ref{New-cor} and (\ref{eq:next-step}) imply that 
\begin{equation}
  \label{eq:double-use}
\cE_s(v^j,\phi) = \lim_{n \to \infty}\cE_s(v^j,\phi_n)=  \lim_{n \to \infty}\int_{\B}f'(u)v^j \phi_n\ dx = 
\int_{\B}f'(u)v^j \phi\ dx,
\end{equation}
and thus (\ref{eq:weak-formulation}) holds. 

Finally, assume that $v^j \in \cH^s_0(\B)$, let $\phi \in \cH^s_0(\B)$, and let   
$(\phi_n)_n$ be a sequence in $\cC^\infty_c(\B)$ with $\phi_n \to \phi$ in $\cH^s_0(\B)$. Then (\ref{eq:double-use}) holds again by the continuity of the quadratic form $\cE_s$ on $\cH^s_0(\B)$, as claimed.
\end{proof}

We now have all the tools to build suitable test functions from partial derivatives in order to estimate the Morse index of $u$ as a solution of (\ref{eq1}). As remarked before, the construction is inspired by \cite{aftalion2004qualitative}.

\begin{defi}
\label{test-functions-built}
Let $\psi_0$ be the function defined in (\ref{eq:def-psi-0}).
For $j= 1,\dots,N$, we define the open half spaces 
\begin{equation}
  \label{eq:def-half-spaces}
H^j_\pm := \{x \in \R^N\::\: \pm x_j>0\}
\end{equation}
and the functions $d_j: \R^N \to \R$ by 
$$
d_j:= \left \{
  \begin{aligned}
  &(v^j)^+\,1_{H^j_+} -(v^j)^-\,1_{H^j_-} && \qquad \text{if $\psi_0(1)\ge 0$;}\\
   &(v^j)^+\,1_{H^j_-} -(v^j)^-\,1_{H^j_+}&& \qquad \text{if $\psi_0(1)<0$.}
  \end{aligned}
\right.
$$
\end{defi}
We note that, for $j=1,\dots,N$, the function $d_j$ is odd with respect to the reflection 
\begin{equation*}
\sigma_j: \R^N \to \R^N, \qquad x = (x_1,\dots,x_j,\dots, x_N)\mapsto \sigma_j(x) = (x_1,\dots,-x_j,\dots, x_N)
\end{equation*}
at the hyperplane $\{x_j = 0\}$ since the function $v^j$ is odd. 

\begin{lemma}\label{l3-new-prelim}
$d_j \in H^s_{loc}(\B)$ for $j = 1,\dots,N$. 
\end{lemma}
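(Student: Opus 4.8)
The plan is to reduce the claim $d_j \in H^s_{loc}(\B)$ to the statement that $v^j$ and its positive/negative parts lie in $H^s_{loc}(\B)$, together with the fact that multiplying by the characteristic function of a half-space does not destroy local $H^s$-regularity away from the boundary hyperplane. First I would recall from the proof of Lemma~\ref{l2-new} that $u \in C^{2,s}_{loc}(\B)$ by Proposition~\ref{reg-prop-solutions}, so $v^j \in C^{1,s}_{loc}(\B) \subset H^s_{loc}(\B)$; in particular $v^j \in H^s(\Omega')$ for every $\Omega' \subset \subset \B$. Since truncation at $0$ is a Lipschitz operation with Lipschitz constant $1$, the Gagliardo seminorm is non-increasing under $t \mapsto t^\pm$, hence $(v^j)^+, (v^j)^- \in H^s(\Omega')$ as well for every $\Omega' \subset \subset \B$.

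The remaining point is to control the effect of the cut-off by $1_{H^j_\pm}$. Fix $\Omega' \subset \subset \B$; I want to show $d_j \in H^s(\Omega')$. Write $w := (v^j)^+ 1_{H^j_+} - (v^j)^- 1_{H^j_-}$ (the case $\psi_0(1) < 0$ being identical after swapping $H^j_+$ and $H^j_-$). The key observation is that $v^j$ is odd under $\sigma_j$, so $v^j$ vanishes on the hyperplane $\{x_j = 0\}$, and since $v^j \in C^{1,s}_{loc}(\B)$ we have the pointwise bound $|v^j(x)| \le C_{\Omega'} |x_j|$ on $\Omega'$. Consequently $|w(x)| = |v^j(x)|$ pointwise (the two half-space pieces just redistribute the sign), and for $x,y \in \Omega'$ lying in the same half-space $H^j_\pm$ one has $|w(x) - w(y)| = |(v^j)^\pm(x) - (v^j)^\pm(y)| \le |v^j(x) - v^j(y)|$, while for $x \in H^j_+$, $y \in H^j_-$ one estimates $|w(x) - w(y)| \le |v^j(x)| + |v^j(y)| \le |v^j(x) - v^j(y)| + 2\min\{|v^j(x)|, |v^j(y)|\}$, using that $v^j$ changes sign across the hyperplane. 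The first term is handled by $[v^j]_{s,\Omega'} < \infty$; for the second term, since the segment from $x$ to $y$ crosses $\{x_j = 0\}$ one has $\min\{|x_j|,|y_j|\} \le |x_j - y_j| \le |x-y|$, so $\min\{|v^j(x)|,|v^j(y)|\} \le C_{\Omega'} |x-y|$, and the resulting integral $\int_{\Omega'}\int_{\Omega'} |x-y|^{2} |x-y|^{-N-2s}\,dx\,dy = \int_{\Omega'}\int_{\Omega'} |x-y|^{2-2s-N}\,dx\,dy$ is finite because $2 - 2s > 0$ and $\Omega'$ is bounded.

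Putting these estimates together yields $[d_j]_{s,\Omega'}^2 \le C\big([v^j]_{s,\Omega'}^2 + \|v^j/\delta\|_{L^\infty(\Omega')}^2 \diam(\Omega')^{?}\big) < \infty$, and since $d_j \in L^2(\Omega')$ is clear from $|d_j| = |v^j| \le |v^j| \in L^2(\Omega')$, we conclude $d_j \in H^s(\Omega')$. As $\Omega' \subset \subset \B$ was arbitrary, $d_j \in H^s_{loc}(\B)$. The main obstacle is the cross-hyperplane term in the Gagliardo double integral: a naive bound $|w(x)-w(y)| \le |v^j(x)|+|v^j(y)|$ combined with $|x-y|^{-N-2s}$ is not integrable near the diagonal for $s \ge \tfrac12$, so one genuinely needs the vanishing of $v^j$ on $\{x_j=0\}$ (equivalently its oddness together with $C^1_{loc}$ regularity) to gain the extra power of $|x-y|$ that makes the integral converge for all $s \in (0,1)$.
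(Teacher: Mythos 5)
Your argument is correct and follows essentially the same route as the paper: both proofs reduce the matter to the cross-hyperplane part of the Gagliardo integral over $\Omega'$ and control it using the vanishing of $v^j$ on $\{x_j=0\}$ (the paper treats each piece $(v^j)^\pm 1_{H^j_\pm}$ separately, integrates out $y$ to get $|x_j|^{-2s}$ and uses $|v^j(x)|\le C|x_j|^s$ from $C^s$-regularity, while you keep the double integral and use the Lipschitz bound $|v^j(x)|\le C|x_j|$ together with $\min\{|x_j|,|y_j|\}\le |x-y|$ --- the same estimate in a slightly different order). One small slip: $|w(x)|=|v^j(x)|$ is false in general (e.g.\ $w=(v^j)^+=0$ at points of $H^j_+$ where $v^j<0$), but the inequality $|w|\le |v^j|$, which is all you actually use, does hold.
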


\begin{proof}
By definition of $d_j$, it suffices to show that 
\begin{equation}
  \label{eq:v-j-pm-reg}
(v^j)^\pm\, 1_{H_\pm^j}  \;\, \in \;H^s_{loc}(\B). 
\end{equation}
We only consider the function $(v^j)^+ \,1_{H^j_+}$, the proof for the other functions is essentially the same. As noted in (\ref{eq:prop-v-j}), we have $v^j \in H^s_{loc}(\B)$, and therefore also $(v^j)^+ \in H^s_{loc}(\B)$ by a standard estimate. To abbreviate, we now put $\chi=1_{H^j_+}$, $v:=(v^j)^+$, and we let $\Omega' \subset \subset \B$ be an open subset of $\B$. Making $\Omega'$ larger if necessary, we may assume that $\Omega'$ is symmetric with respect to the reflection $\sigma_j$. To show that $v\chi\in H^s_{loc}(\Omega')$, we write 
\begin{align*}
[v\chi]_{s,\Omega'}^2&= [v]_{s,\Omega' \cap H^j_+}^2 + \int_{\Omega' \cap H^j_+}|v(x)|^2 \int_{\Omega'\cap H^j_-}|x-y|^{-N-2s}\ dydx\\
		&\leq [v]_{s, \Omega'}^2+ \int_{\Omega' \cap H^j_+}|v(x)|^2
\int_{\{y \in \R^N, |y-x| \ge |x_j|\}}|x-y|^{-N-2s} dydx\\
		&= [v]_{s,\Omega'}^2 + \int_{\Omega' \cap H^j_+}|v(x)|^2
\int_{\{z \in \R^N, |z| \ge |x_j|\}}|z|^{-N-2s} dz dx\\
&= [v]_{s, \Omega'}^2+ \frac{|S^{N-1}|}{2s} \int_{\Omega' \cap H^j_+}|v(x)|^2 |x_j|^{-2s} dx.
		\end{align*}
Since $v = (v^j)^+ \in C^s_{loc}(\B)$ by Proposition~\ref{reg-prop-solutions} and $v \equiv 0$ on $\{x_j=0\}$, we have $|v(x)|\le C|x_j|^{s}$ for $x \in \Omega'\cap H^j_+$. Therefore, the latter integral is finite, and $(v^j)^+\,1_{H^j_+}= v\chi\in H^s_{loc}(\B)$.
\end{proof}

The next lemma is of key importance for the proof of Theorem~\ref{T}. 
\begin{lemma}\label{l3-new}
Let $j= 1,\dots,N$. 
\begin{itemize}
\item[(i)] If $\psi_0(1) \not = 0$, we have $d_j \in \cH^s_0(\B)$, and $d_j$ has compact support in $\B$.
\item[(ii)] If $s \in (\frac{1}{2},1)$ and $\psi_0(1) = 0$, then we have $v^j \in \cH^s_0(\B)$ and $d_j \in \cH^s_0(\B)$.
\end{itemize}
\end{lemma}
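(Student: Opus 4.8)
The strategy is to analyze the boundary behaviour of $v^j$ and $d_j$ separately in the two cases, using the regularity information from Proposition~\ref{reg-prop-solutions} together with the containment criterion of Lemma~\ref{compact-support-H-s-0}.

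First I would treat part (i), where $\psi_0(1) \neq 0$. Since $\psi = u/\delta^s$ is continuous on $\ov\B$ by \eqref{eq:def-psi} and $\psi$ is radial, $\psi_0(1) \neq 0$ forces $\psi_0$ to be bounded away from zero in some annulus $\{1-\rho < |x| \le 1\}$; in particular $u$ does not change sign near $\partial\B$, say $u > 0$ there when $\psi_0(1) > 0$ (and symmetrically when $\psi_0(1) < 0$). By Proposition~\ref{reg-prop-solutions}(iii), $\delta^{1-s}(x)\,\partial_r u(x) \to -s\psi_0(1)$ as $|x| \to 1$, so the radial derivative $\partial_r u$ has a strict sign in a possibly smaller annulus $\{1-\rho' < |x| < 1\}$ — namely $\partial_r u < 0$ there when $\psi_0(1) > 0$. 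Now write $v^j(x) = \partial_j u(x) = \partial_r u(x)\,\frac{x_j}{|x|}$, so on the annulus $v^j$ has the same sign as $-x_j$ (when $\psi_0(1)>0$): thus $(v^j)^+$ is supported, near $\partial\B$, inside $\ov{H^j_-}$ and $(v^j)^-$ near $\partial\B$ inside $\ov{H^j_+}$. With the case distinction in Definition~\ref{test-functions-built} chosen precisely so that when $\psi_0(1) \ge 0$ one multiplies $(v^j)^+$ by $1_{H^j_+}$ and $(v^j)^-$ by $1_{H^j_-}$, the product $d_j$ vanishes identically on the annulus $\{1-\rho' < |x| < 1\}$ (the positive part is killed off $H^j_-$ and the negative part off $H^j_+$). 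Hence $\supp\, d_j \subset \ov{B_{1-\rho'}(0)} \subset\subset \B$, and $d_j \in H^s_{loc}(\B)$ by Lemma~\ref{l3-new-prelim}; Lemma~\ref{compact-support-H-s-0} then yields $d_j \in \cH^s_0(\B)$.

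For part (ii), $s \in (\tfrac12,1)$ and $\psi_0(1) = 0$. By Proposition~\ref{reg-prop-solutions}(iv), $\psi \in C^1(\ov\B)$, so $\psi_0 \in C^1([0,1])$ with $\psi_0(1)=0$, whence $|\psi_0(r)| \le C(1-r)$ near $r=1$, i.e. $|u(x)| \le C\,\delta^{1+s}(x)$. Combined with the gradient bound $|\nabla u(x)| \le c\,\delta^{s-1}(x)$ from Proposition~\ref{reg-prop-solutions}(i), one gets good control; the cleanest route is to estimate the global Gagliardo seminorm of $v^j$ directly. Split $[v^j]_{s,\R^N}^2$ into the contribution over an interior region $\Omega'' \subset\subset \B$, which is finite since $v^j \in H^s_{loc}(\B)$, plus the boundary-layer contribution. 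For the latter, write the double integral as $\iint |v^j(x) - v^j(y)|^2 |x-y|^{-N-2s}$ with $x$ in the annulus and $y$ anywhere; the dangerous term is $\int_{\text{annulus}} |v^j(x)|^2 \big(\int_{\R^N\setminus\B}|x-y|^{-N-2s}dy\big)dx \lesssim \int_{\text{annulus}} |\nabla u(x)|^2 \delta^{-2s}(x)\,dx \lesssim \int \delta^{2s-2-2s}\,dx = \int \delta^{-2}\,dx$, which diverges — so the naive $|\nabla u| \lesssim \delta^{s-1}$ bound is \emph{not} enough, and this is where $\psi_0(1)=0$ must enter. One should instead split the annulus $\{1-\eps<|x|<1\}$ a second time: very close to the boundary, $\{\delta(x)<|x-y|\}$, use $\psi \in C^1$ to write $u(x) = u(x) - u(x^*)$ with $x^* \in \partial\B$ the projection and exploit $|u(x)| \le C\delta^{1+s}(x)$ so that $|v^j(x)| = |\partial_j u(x)|$ near the boundary is actually $O(\delta^s(x))$ — indeed differentiating $u = \delta^s \psi$ with $\psi \in C^1$ and $\psi(x^*)=0$ gives $\nabla u = s\delta^{s-1}(\nabla\delta)\psi + \delta^s \nabla\psi$, and since $|\psi(x)| \le C\delta(x)$ the first term is also $O(\delta^s)$, so in fact $|\nabla u(x)| \le C\delta^s(x)$ \emph{in the radial, $\psi_0(1)=0$ case} — wait, this needs $s\le 1$; at any rate with $|v^j(x)|\le C\delta^s(x)$ the boundary integral becomes $\int \delta^{2s}\delta^{-2s}\,dx = \int 1\,dx < \infty$. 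Then $v^j \in H^s(\R^N)$ with $v^j \equiv 0$ off $\B$, and since $v^j \in C^s_{loc}(\B)$ vanishes continuously on $\partial\B$ with $u$ having continuous boundary, \eqref{eq:density} gives $v^j \in \cH^s_0(\B)$; then $d_j \in \cH^s_0(\B)$ follows because $d_j = (v^j)^\pm 1_{H^j_\pm}$-type combinations, and $(v^j)^\pm 1_{H^j_\pm} \in \cH^s_0(\B)$ by the same computation as in Lemma~\ref{l3-new-prelim} now carried out globally (the extra term $\int_{\Omega'\cap H^j_+}|v^j|^2|x_j|^{-2s}$ is finite because $|v^j(x)| \le C|x_j|^s$ from $v^j\equiv 0$ on $\{x_j=0\}$, and the truncation does not increase the seminorm on the boundary layer).

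\textbf{Main obstacle.} The crux is part (ii): controlling the full Gagliardo seminorm of $v^j$ up to the boundary. The interior piece and the truncation-by-$1_{H^j_\pm}$ piece are routine (essentially Lemma~\ref{l3-new-prelim}), but the boundary-layer estimate $\int_{\text{annulus}}|v^j(x)|^2\delta^{-2s}(x)\,dx < \infty$ genuinely requires upgrading $|\nabla u(x)| \lesssim \delta^{s-1}(x)$ to $|\nabla u(x)| \lesssim \delta^s(x)$ near $\partial\B$, which in turn rests on $\psi \in C^1(\ov\B)$ \emph{and} $\psi_0(1)=0$ — precisely the two hypotheses of (ii). Getting this sharp gradient decay cleanly (e.g. via $\nabla u = s\delta^{s-1}(\nabla\delta)\psi + \delta^s\nabla\psi$ together with the radial structure forcing $|\psi(x)| = |\psi_0(|x|)| \le C\delta(x)$) is the technical heart of the lemma.
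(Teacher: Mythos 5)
Your part (i) is correct and follows the paper's argument: the sign of $\psi_0(1)$ together with Proposition~\ref{reg-prop-solutions}(iii) forces a sign on $\partial_r u$ in an annulus, the half-space truncation in Definition~\ref{test-functions-built} is rigged so that $d_j$ vanishes there, oddness under $\sigma_j$ gives compact support, and Lemmas~\ref{l3-new-prelim} and \ref{compact-support-H-s-0} finish. The improved decay $|\nabla u(x)|\le C\delta^s(x)$ that you derive in part (ii) from $\psi\in C^1(\ov\B)$, $\psi_0(1)=0$ and $\nabla u = s\delta^{s-1}\psi\nabla\delta+\delta^s\nabla\psi$ is also correct and is indeed where both hypotheses of (ii) enter (the paper uses exactly this to conclude $\nabla u\to 0$ at $\partial\B$, hence $v^j\in C^0(\R^N)$ with $v^j\equiv 0$ outside $\B$).

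However, your proof that $v^j\in\cH^s_0(\B)$ in part (ii) has a genuine gap. In your splitting of $[v^j]_{s,\R^N}^2$ you only control the term $\int_{A}|v^j(x)|^2\bigl(\int_{\R^N\setminus\B}|x-y|^{-N-2s}dy\bigr)dx$ with $A$ the boundary annulus; you never address the contribution $\int_A\int_A |v^j(x)-v^j(y)|^2|x-y|^{-N-2s}\,dxdy$ with \emph{both} points in the boundary layer. A pointwise bound $|v^j|\le C\delta^s$ cannot control this term: for $y$ near $x$ the integrand is only bounded by $C\delta^{2s}(x)|x-y|^{-N-2s}$, which is not integrable in $y$ near $x$. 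One needs quantitative control on the oscillation of $v^j=\partial_j u$ up to $\partial\B$ (the available interior estimate $|D^2u|\lesssim\delta^{s-2}$ is not obviously sufficient), and your sketch supplies none. The paper avoids this entirely by a different device: having shown $v^j\in C^0(\R^N)$ with $v^j\equiv 0$ off $\B$, it takes the unique weak solution $w\in\cH^s_0(\B)$ of $(-\Delta)^s w=f'(u)v^j$ (which lies in $C^s_0(\B)$ by Ros-Oton--Serra), notes via Lemma~\ref{l2-new} that $V:=w-v^j$ is continuous, vanishes outside $\B$ and satisfies $(-\Delta)^sV=0$ distributionally in $\B$, and applies Silvestre's maximum principle (Proposition~\ref{prop1}) to $\pm V$ to conclude $v^j=w\in\cH^s_0(\B)\cap C^s_0(\B)$. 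This identification also yields the \emph{global} $C^s(\ov\B)$ regularity of $v^j$, which is what justifies the bound $|v^j(x)|\le C|x_j|^s$ on all of $\B\cap H^j_+$ in the final truncation step; your route only gives $v^j\in C^s_{loc}(\B)$ plus $|v^j|\lesssim\delta^s$, neither of which yields $|v^j(x)|\le C|x_j|^s$ near $\partial\B\cap\{x_j=0\}$.
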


\begin{proof}
(i) By Lemma~\ref{compact-support-H-s-0} and Lemma~\ref{l3-new-prelim}, it suffices to show that $d_j$ has compact support in $\B$. We now distinguish the cases $\psi_0(1)>0$ and $\psi_0(1)<0$. 

If $\psi_0(1)>0$, we have $\partial_ru(x)\leq 0$ in $\B\setminus B_{r_*}(0)$ for some $r_* \in (0,1)$ by (\ref{eq:d-r-equation}), and therefore 
$$
v^j(x)= \partial_j u(x)=\frac{x_{j}}{|x|} \partial_r u(x) \le 0 \qquad \text{for $x \in \B\setminus B_{r_*}(0)$ with $x_j \ge 0$.}
$$
Consequently, $d_j(x) = (v^j)^+(x)=0$ for $x \in \B\setminus B_{r_*}(0)$ with $x_j \ge 0$. Since $d_j$ is odd with respect to the reflection $\sigma_j$ it follows that $\supp\, d_j \subset \overline{B_{r_*}(0)}$, so $d_j$ is compactly supported in $\B$.

If $\psi_0(1)<0$, we have $\partial_ru(x)\ge 0$ in $\B\setminus B_{r_*}(0)$ for some $r_* \in (0,1)$ by (\ref{eq:d-r-equation}), which in this case, similarly as above, implies that $d_j(x) = -(v^j)^-(x)=0$ for $x \in \B\setminus B_{r_*}(0)$ with $x_j \ge 0$. Again we conclude that $d_j$ is compactly supported in $\B$ since it is odd with respect to the reflection $\sigma_j$.\\[0.1cm]
(ii) Since $s \in (\frac{1}{2},1)$, it follows from Proposition~\ref{reg-prop-solutions}(iv) that 
$\psi \in C^1(\overline \B)$ and therefore $\psi_0 \in C^1([0,1])$, whereas 
$\psi_0(1)=0$ by assumption.  Consequently, $ \psi(x)\delta^{s-1}(x) \to 0$ as $ |x| \to 1$, and therefore 
$$
\nabla u(x)= \delta^s(x) \nabla \psi(x) + s \psi(x)\delta^{s-1}(x)\nabla \delta(x)\to 0 \qquad \text{as $|x| \to 1$.}
$$
It thus follows that $u \in C^1(\R^N)$ with $u \equiv 0$ on $\R^N \setminus \B$, and therefore $v^{j}\in C^{0}(\R^{N})$ with $v^{j} \equiv 0$ in $\R^{N}\setminus\B.$ To see that $v^j \in \cH^s_0(\B)$, we shall use Proposition~\ref{prop1} as follows: Since the function $f'(u)v^j$ is continuous and therefore bounded in $\overline \B$, there exists a unique weak solution $w \in\cH^s_0(\B)$ to the Poisson problem
\begin{equation}
	(-\Delta)^sw  = f'(u)v^j \quad \text{in $\B$},\qquad \qquad w = 0 \quad \text{in $\R^{N}\setminus \B$}
	\end{equation}
which satisfies $w \in C^s_0(\B)$ by \cite[Proposition 1.1]{Ros14}. By setting $V:= w - v^j$, it follows  that $V\in C^{0}(\R^N)$ with $V \equiv 0$ in $\R^{N}\setminus\B.$
 Moreover, by Lemma~\ref{l2-new} the function $V$ satisfies the equation 
$(-\Delta)^s {V} = 0$ in $\B$ in the sense of distributions. Since $V$ is continuous, Proposition \ref{prop1} -- applied to $\pm V$ -- implies that $V\equiv 0$  in $\R^N$, i.e., 
\begin{equation}
  \label{eq:conclusion-proof-ii}
v^{j}=w \in\cH^s_0(\B) \cap C^s_0(\B). 
\end{equation}
By a similar argument as in the proof of Lemma~\ref{l3-new-prelim}, we will now see that $d_j \in \cH^s_0(\B)$. For the convenience of the reader, we give the details. It is clearly sufficient to show that
\begin{equation}
  \label{eq:v-j-pm-reg1}
(v^j)^\pm \,1_{H^j_\pm} \:\, \in \;\cH^s_{0}(\B). 
\end{equation}
We only consider the function $(v^j)^+ \,1_{H^j_+}$, the proof for the other functions is the same. Since $v^j \in \cH^s_0(\B)$, we also have $(v^j)^\pm \in \cH^s_0(\B)$ by a standard estimate. To abbreviate, we now put $\chi=1_{H^j_+}$ and $v:=(v^j)^+$.
To show that $v\chi\in \cH^s_0(\B)$, we note that $v\chi \equiv 0$ in $\R^N \setminus \cB$, and we estimate 
\begin{align*}
[v\chi]_{s,\R^N}^2&= [v]_{s,H^{j}_+}^2 + \int_{H^j_+ \cap \B}|v(x)|^2 \int_{H^j_-}|x-y|^{-N-2s}\ dydx\\
		&\leq [v]_{s,\R^N}^2+ \int_{H^j_+ \cap \B}|v(x)|^2
\int_{\{z \in \R^N, |z| \ge |x_j|\}}|z|^{-N-2s} dz dx\\
&= [v]_{s,\R^N}^2 + \frac{|S^{N-1}|}{2s} \int_{H^j_+ \cap \B}|v(x)|^2 |x_j|^{-2s} dx.
		\end{align*}
Since $v = (v^j)^+ \in C^s(\ov \B)$ by (\ref{eq:conclusion-proof-ii}) and $v \equiv 0$ on $\{x_j=0\}$, we have $|v(x)|\le C|x_j|^{s}$ for $x \in H^j_+ \cap \B$. Therefore, the latter integral is finite, and $(v^j)^+ \,1_{H^j_+}= v\chi\in \cH^s_0(\B)$.
\end{proof}

\begin{cor}
\label{v-j-d-k-energy}  
If $\psi_0(1)\not = 0$ or $s \in (\frac{1}{2},1)$, then the values $\cE_{s}(d_j,d_k)$ and $\cE_{s}(v^j,d_k)$ are well-defined and satisfy
\begin{equation*}
\cE_{s}(v^j,d_k) =   \int_{\B} f'(u)v^j d_k\ dx \qquad \text{for $j,k= 1,\dots,N$.} 
\end{equation*}
\end{cor}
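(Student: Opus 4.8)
The plan is to deduce Corollary~\ref{v-j-d-k-energy} from Lemma~\ref{l3-new} together with the weak-formulation statements in Lemma~\ref{l2-new} and the bilinearity estimates from Section~\ref{section2}. First I would observe that under the hypothesis ``$\psi_0(1)\neq 0$ or $s\in(\frac12,1)$'', Lemma~\ref{l3-new} guarantees in all cases that $d_j\in\cH^s_0(\B)$ for each $j$. Hence $\cE_s(d_j,d_k)=\langle d_j,d_k\rangle_{\cH^s_0(\B)}$ is simply the scalar product of two elements of the Hilbert space $\cH^s_0(\B)$, so it is certainly well-defined and finite; no further argument is needed for that part of the claim. (One may optionally note here that $d_j$ is the trivial extension of an $H^s_{loc}(\B)$ function, consistent with the identification made throughout.)

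Next I would turn to $\cE_s(v^j,d_k)$ and split into the two cases of the hypothesis. If $s\in(\frac12,1)$, then by Lemma~\ref{l3-new}(ii) (when $\psi_0(1)=0$) or Lemma~\ref{l3-new}(i) (when $\psi_0(1)\neq 0$) we have $v^j\in\cH^s_0(\B)$ — in the first case directly, in the second case because $v^j\in\cL^1_s\cap H^s_{loc}(\B)$ by \eqref{eq:prop-v-j} and, actually, one needs $v^j\in\cH^s_0(\B)$ which holds when $\psi_0(1)\neq 0$ and $s\in(\frac12,1)$ by Proposition~\ref{reg-prop-solutions}(iv) exactly as in the proof of Lemma~\ref{l3-new}(ii). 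Actually the cleanest route is: in \emph{either} subcase with $s\in(\frac12,1)$ one has $v^j\in\cH^s_0(\B)$, so $\cE_s(v^j,d_k)$ is a genuine Hilbert-space pairing, and then the final clause of Lemma~\ref{l2-new} — ``if $v^j\in\cH^s_0(\B)$, then \eqref{eq:weak-formulation} holds for all $\phi\in\cH^s_0(\B)$'' — applied with $\phi=d_k\in\cH^s_0(\B)$ yields $\cE_s(v^j,d_k)=\int_\B f'(u)v^j d_k\,dx$ immediately.

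The remaining case is $s\in(0,\frac12]$ together with $\psi_0(1)\neq 0$. Here $v^j$ need not lie in $\cH^s_0(\B)$, but by Lemma~\ref{l3-new}(i) the function $d_k$ is in $\cH^s_0(\B)$ \emph{and has compact support in $\B$}. Since $v^j\in\cL^1_s\cap H^s_{loc}(\B)$, Lemma~\ref{new-lemma-1} tells us that $\cE_s(v^j,d_k)$ is well-defined in the Lebesgue sense (the integrand is absolutely integrable by \eqref{new-estimate}), and the middle assertion of Lemma~\ref{l2-new} — equation \eqref{eq:weak-formulation}, which is proved precisely for $\phi\in\cH^s_0(\B)$ with compact support in $\B$ — applies with $\phi=d_k$ and gives $\cE_s(v^j,d_k)=\int_\B f'(u)v^j d_k\,dx$. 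This completes the proof in all cases.

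I expect no serious obstacle: the corollary is essentially a bookkeeping statement that collects the right hypothesis on $(s,\psi_0(1))$ under which the membership facts of Lemma~\ref{l3-new} and the two regimes of the weak formulation in Lemma~\ref{l2-new} become applicable. The only point requiring a little care is making sure that, in the case $s\in(\frac12,1)$, one genuinely has $v^j\in\cH^s_0(\B)$ whether or not $\psi_0(1)$ vanishes — for $\psi_0(1)=0$ this is Lemma~\ref{l3-new}(ii), and for $\psi_0(1)\neq 0$ it follows by the identical Poisson-problem/maximum-principle argument used in the proof of Lemma~\ref{l3-new}(ii), since that argument only used $s\in(\frac12,1)$ through Proposition~\ref{reg-prop-solutions}(iv) to get $u\in C^1(\R^N)$. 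Alternatively, and more economically, for $\psi_0(1)\neq 0$ one can simply invoke Lemma~\ref{l3-new}(i) to put $d_k$ in $\cH^s_0(\B)$ with compact support and use \eqref{eq:weak-formulation} directly, avoiding any discussion of whether $v^j\in\cH^s_0(\B)$; I would present it this way to keep the argument uniform with the case $s\le\frac12$.
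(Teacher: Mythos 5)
Your argument is correct and is exactly what the paper intends (its proof is a one-line citation of Lemmas~\ref{new-lemma-1}, \ref{l2-new} and \ref{l3-new}): when $\psi_0(1)\neq 0$ use the compact support of $d_k$ from Lemma~\ref{l3-new}(i) together with \eqref{eq:weak-formulation}, and when $s\in(\frac12,1)$ with $\psi_0(1)=0$ use $v^j\in\cH^s_0(\B)$ from Lemma~\ref{l3-new}(ii) and the final clause of Lemma~\ref{l2-new}. One caveat on a side remark you ultimately discard: the claim that the Poisson-problem/maximum-principle argument of Lemma~\ref{l3-new}(ii) goes through ``identically'' when $s\in(\frac12,1)$ but $\psi_0(1)\neq 0$ is false, since that argument needs $\psi_0(1)=0$ (together with $\psi_0\in C^1$) to conclude $\psi(x)\delta^{s-1}(x)\to 0$ and hence $v^j\in C^0(\R^N)$; if $\psi_0(1)\neq 0$ then $\nabla u$ blows up like $\delta^{s-1}$ at $\partial\B$ and $v^j$ is not continuous up to the boundary. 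Your ``more economical'' presentation via Lemma~\ref{l3-new}(i) avoids this entirely and is the right choice.
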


\begin{proof}
This follows from Lemma~\ref{new-lemma-1}, Lemma~\ref{l2-new} and Lemma~\ref{l3-new}.   
\end{proof}

\section{Proof of Theorem \ref{T}}\label{section4}%%%%%%%%%%%%%%%%%%%%%%%%%%%%%

In this section we complete the proof of Theorem \ref{T}. As before, we consider a fixed radial weak solution $u \in \cH^s_0(\B) \cap L^\infty(\B)$ of (\ref{eq1}), and we will continue using the notation related to $u$ as introduced in Section~\ref{section3}. Moreover, in accordance with the assumptions of Theorem \ref{T}, we assume that $u$ changes sign, which implies that 
\begin{equation}
  \label{eq:nontrival-partial}
(v^j)^\pm \,1_{H^j_+}\not \equiv 0  \quad \text{and}\quad (v^j)^\pm \,1_{H^j_-}
\not \equiv 0 \qquad \text{for $j = 1,\dots,N$,} 
\end{equation}
where the half spaces $H^j_\pm$ are defined in (\ref{eq:def-half-spaces}). We first note that, under the assumptions of Theorem \ref{T}, we have 
\begin{equation}
  \label{eq:alternative-main-T-consequence}
\psi_0(1)\not = 0 \qquad \text{or}\qquad s \in (\frac{1}{2},1).
\end{equation}
Indeed, if $s \in (0,\frac{1}{2}]$, then $\psi_0^2(1)>0$ by (\ref{eq:add-cond}) and (\ref{eq:pohozaev}). 

Next we recall that the $n$-th Dirichlet eigenvalue $\lambda_{n,L}$ of the linearized operator $L$ defined in (\ref{O2}) admits the variational characterization 
\begin{equation}
\label{eigenvalue-characterization}
\displaystyle\lambda_{n,L} = \min_{V \in \cV_n}\max_{v\in S_V} \cE_{s,L}(v,v)
\end{equation}
where 
\begin{equation}\label{e1}
(v,w) \mapsto \cE_{s,L}(v,w): = \cE_s(v,w) -\int_{\B}f'(u)vw\ dx
\end{equation}
is the bilinear form associated to $L$, $\cV_n$ denotes the family of $n$-dimensional subspaces of $\cH^s_0(\B)$ and $S_V:= \{v \in V\::\: \|v\|_{L^2(\B)}=1\}$ for $V \in \cV_n$. 

To estimate $\lambda_{n,L}$ from above, we wish to build test function spaces $V$ by using the functions $d_j$ introduced in Definition~\ref{test-functions-built}. By Lemma~\ref{l3-new} and (\ref{eq:alternative-main-T-consequence}), we have $d_j \in \cH^s_0(\Omega)$ for $j=1,\dots,N$. Moreover, as a consequence of Corollary~\ref{v-j-d-k-energy}, the values $\cE_{s}(v^j,d_k)$ are well-defined and satisfy
\begin{equation}
  \label{eq:weak-formulation-d-j-consequence}
\cE_{s,L}(v^j,d_k) =  0 \qquad \text{for $j,k= 1,\dots,N$.} 
\end{equation}

We need the following key inequality.

\begin{lemma}\label{l4}
For $j\in\{1,\dots, N\}$ we have $\cE_{s,L}(d_j,d_j)< 0.$
\end{lemma}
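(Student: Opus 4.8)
The plan is to exploit that $d_j$ is, up to sign on the two half-spaces, built from $v^j$, combined with the key identity $\cE_{s,L}(v^j,d_j)=0$ from \eqref{eq:weak-formulation-d-j-consequence}. First I would write $d_j = (v^j)^+ 1_{H^j_\pm} - (v^j)^- 1_{H^j_\mp}$ according to the sign of $\psi_0(1)$; in either case, observe that on $H^j_+$ the function $d_j$ agrees with $v^j$ up to an overall sign $\epsilon \in \{+1,-1\}$ (say $d_j = \epsilon v^j$ on $H^j_+ \cap \{d_j \ne 0\}$), and by the oddness of both $v^j$ and $d_j$ under $\sigma_j$ the same holds on $H^j_-$. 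So $d_j$ and $v^j$ have the same sign structure and $d_j \cdot v^j = \epsilon (v^j)^2 1_{\{d_j \ne 0\}} \le 0$ or $\ge 0$ pointwise depending on $\epsilon$; the crucial point is that $\int_{\B} f'(u) v^j d_j\,dx$ appears in $\cE_{s,L}(v^j,d_j)=0$, so I can solve for it: $\int_{\B} f'(u) v^j d_j\,dx = \cE_s(v^j,d_j)$.

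Next I would expand $\cE_{s,L}(d_j,d_j) = \cE_s(d_j,d_j) - \int_{\B} f'(u) d_j^2\,dx$ and compare it with $0 = \cE_{s,L}(v^j,d_j) = \cE_s(v^j,d_j) - \int_{\B}f'(u)v^j d_j\,dx$. Since $d_j^2 = v^j d_j$ pointwise wherever $d_j \ne 0$ (because $|d_j| = |v^j|$ on its support and they have matching signs up to the common factor $\epsilon$), we get $\int_{\B} f'(u) d_j^2\,dx = \int_{\B} f'(u) v^j d_j\,dx = \cE_s(v^j,d_j)$. Therefore $\cE_{s,L}(d_j,d_j) = \cE_s(d_j,d_j) - \cE_s(v^j,d_j) = \cE_s(d_j - v^j, d_j)$. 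Now $d_j - v^j$ vanishes precisely on the set where $d_j = v^j$, and on the complementary region $d_j = -v^j$ (this is where the half-space with the "wrong" sign sits — e.g. the part of $H^j_+$ where $v^j < 0$ when $\epsilon = +1$). Writing $g := d_j - v^j$, one checks $g = -2(v^j)^- 1_{H^j_+} - \ldots$ type expression, and the goal is to show $\cE_s(g, d_j) < 0$.

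The main obstacle — and the heart of the lemma — is proving strict negativity of $\cE_s(d_j - v^j, d_j)$. The clean way is to note that $d_j = \frac{1}{2}(d_j + v^j) + \frac{1}{2}(d_j - v^j)$ is not quite what is wanted; instead I would use the standard trick with the bilinear Gagliardo form: for functions $a,b$ with $a = (v^j)^+ 1_{H^j_+} - (v^j)^- 1_{H^j_-}$ and its sign-flipped partner, the cross term $\cE_s(d_j, d_j) = \cE_s(v^j, d_j) + \cE_s(d_j-v^j,d_j)$, and the key is that $\langle d_j - v^j, d_j \rangle$ in the Gagliardo inner product is strictly negative. Concretely, decompose $d_j$ into its four pieces supported on $\{(v^j)^\pm$ restricted to $H^j_\pm\}$; using that $d_j$ and $v^j - d_j$ are supported on complementary half-spaces with opposite signs and that the kernel $|x-y|^{-N-2s}$ is positive, every cross integral $\int\int \frac{(d_j(x)-d_j(y))((d_j-v^j)(x)-(d_j-v^j)(y))}{|x-y|^{N+2s}}$ reduces to integrals of the form $-\int_{H^j_+}\int_{H^j_-} d_j(x)^2 |x-y|^{-N-2s}$ plus genuinely negative contributions, with strictness coming from \eqref{eq:nontrival-partial} (the relevant pieces are not identically zero because $u$ changes sign). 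I would carry this out by first reducing to showing $\cE_s(d_j, d_j) < \cE_s(v^j, d_j)$, then writing out the Gagliardo double integrals for $d_j$ versus $v^j$ and cancelling the common diagonal-block terms, leaving only off-diagonal blocks between $H^j_+$ and $H^j_-$ where the integrand for $d_j$ carries a sign making it strictly smaller than for $v^j$; the nontriviality from sign-change of $u$ then upgrades "$\le$" to "$<$".
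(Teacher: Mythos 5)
Your setup coincides with the paper's: the identity $v^jd_j=d_j^2$ pointwise, combined with $\cE_{s,L}(v^j,d_j)=0$ from \eqref{eq:weak-formulation-d-j-consequence}, gives $\cE_{s,L}(d_j,d_j)=\cE_s(d_j-v^j,\,d_j)$, the $f'(u)$-term dropping out because $(d_j-v^j)d_j\equiv 0$. Up to that point the argument is correct.

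The gap is in the final step, which is the heart of the lemma. Write $g:=d_j-v^j$. Since $g\,d_j\equiv 0$ and the kernel is symmetric, one has
\begin{equation*}
\cE_s(g,d_j)=-c(N,s)\int_{\R^N}\int_{\R^N} g(x)\,d_j(y)\,|x-y|^{-N-2s}\,dx\,dy ,
\end{equation*}
and in the case $\psi_0(1)\ge 0$ both $g=(v^j)^-1_{H^j_+}-(v^j)^+1_{H^j_-}$ and $d_j$ are $\ge 0$ on $H^j_+$ and $\le 0$ on $H^j_-$. Hence the two diagonal blocks $H^j_\pm\times H^j_\pm$ contribute with the favorable sign, while the two off-diagonal blocks $H^j_+\times H^j_-$ and $H^j_-\times H^j_+$ contribute with the \emph{unfavorable} sign: a naive block decomposition leaves a difference of two positive quantities and no conclusion. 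Your bookkeeping is backwards on exactly this point --- the diagonal blocks do not cancel, and the off-diagonal blocks are not ``genuinely negative''; also, $d_j$ and $v^j-d_j$ are not supported on complementary half-spaces but on complementary sign-sets of $v^j$ inside each half-space. The missing idea is the reflection antisymmetrization used in the paper: since $g$ and $d_j$ are odd under $\sigma_j$, the integral folds onto $H^j_+\times H^j_+$ with the kernel $|x-y|^{-N-2s}$ replaced by the difference $\ell_j(x,y)=|x-y|^{-N-2s}-|\sigma_j(x)-y|^{-N-2s}$, which is strictly positive for $x,y\in H^j_+$ because $|x-y|<|\sigma_j(x)-y|$ there. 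Only after this kernel comparison does the pointwise sign of $g(x)d_j(y)$ on $H^j_+\times H^j_+$, together with the nontriviality \eqref{eq:nontrival-partial}, yield the strict inequality. Without it the argument does not close.
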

\begin{proof} 
To simplify notation, we put $k(z)= c(N,s)|z|^{-N-2s}$ for $z \in \R^N \setminus \{0\}$. Since $v^jd_j = d^2_j$ in $\R^N$ by definition of $d_j$ and therefore 
$$
\int_{\B}f'(u)v^jd_j\ dx=\int_{\B}f'(u)d_j^2\ dx,
$$
we have, by (\ref{eq:weak-formulation-d-j-consequence}), 
\begin{align*}
&\cE_{s,L}(d_j,d_j) = \cE_{s,L}(d_j-v^j,d_j)\\
&\quad =\frac{1}{2}\int_{\R^N}\int_{\R^N}\bigg(\bigl(d_j(x)-v^j(x)-(d_j(y)-v^j(y))\bigr)(d_j(x)-d_j(y))\bigg)k(x-y)dxdy\\
&\quad =\frac{1}{2}\int_{\R^N}\int_{\R^N}\bigg(v^j(x)d_j(y)+v^j(y)d_j(x)-2d_j(x)d_j(y)\bigg) k(x-y)\ dxdy
\end{align*}
In the following, we put 
$$
\ell_j(x,y):= k(x-y)-k(\sigma_j(x)-y) \qquad \text{for $x,y \in \R^N,\: x \not=y.$} 
$$
Using the oddness of the functions $v^{j}$ and $d_j$ with respect to the reflection $\sigma_j$, we deduce that 
\begin{align}
&\cE_{s,L}(d_j,d_j) =
\frac{1}{2}\int_{\R^N}\int_{H_+^j}\bigg(v^j(x)d_j(y)+v^j(y)d_j(x)-2d_j(x)d_j(y)\bigg) \ell_j(x,y)\ dxdy \nonumber\\
 &= \frac{1}{2}
\int_{H_+^j}\int_{H_+^j}\bigg(v^j(x)d_j(y)+v^j(y)d_j(x)-2d_j(x)d_j(y)\bigg)\bigl(\ell_j(x,y)-\ell_j(x,\sigma_j(y))\bigr)\ dxdy \nonumber\\
&= \int_{H_+^j}\int_{H_+^j}\bigg(v^j(x)d_j(y)+v^j(y)d_j(x)-2d_j(x)d_j(y)\bigg)\ell_j(x,y) dxdy. \label{d-j-d-jcomputation-1}
\end{align}
Here we used in the last step that  
$$
k(\sigma_j(x)-\sigma_j(y))=k(x-y)\qquad \text{and}\qquad k(\sigma_j(x)-y)=k(x-\sigma_j(y))
$$
for $x, y\in \R^N$, $x \not = y$ and therefore 
\begin{align*}
\ell_j(x,y)-\ell_j(x,\sigma_j(y)) &= k(x-y)-k(\sigma_j(x)-y)-\bigl(k(x-\sigma_j(y))-k(\sigma_j(x)-\sigma_j(y))\bigr)\\
&= 2 \ell_j(x,y).
\end{align*}
Next, we note that
\begin{equation}
\label{d-j-d-jcomputation-2}  
\ell_j(x,y)= k(x-y)-k(\sigma_j(x)-y)>0 \qquad \text{for $x,y \in H_+^j$.}
\end{equation}
Moreover, we claim that the function
\begin{align*}
  (x,y) \mapsto h_j(x,y)&=v^j(x)d_j(y)+v^j(y)d_j(x)-2d_j(x)d_j(y)\\
  &= (v^j(x)-d_j(x)) d_j(y) +(v^j(y)-d^j(y))d_j(x)
\end{align*}
satisfies 
\begin{equation}
\label{d-j-d-jcomputation-3}  
h_j \le 0 \quad \text{and}\quad h_j \not \equiv 0 \qquad \text{on $H_+^j \times H_+^j$.}
\end{equation}
Indeed, if $\psi_0(1) \ge 0$, we have $d_j = (v^{j})^+$ and therefore $v^j-d_j = -(v^{j})^-$ on $H_+^j$. Hence (\ref{d-j-d-jcomputation-3}) follows from (\ref{eq:nontrival-partial}). Moreover, if $\psi_0(1)< 0$, we have $d_j = -(v^{j})^-$ and therefore $v^j-d_j = (v^{j})^+$ on $H_+^j$. Again (\ref{d-j-d-jcomputation-3}) follows from (\ref{eq:nontrival-partial}). The claim now follows by combining (\ref{d-j-d-jcomputation-1}), (\ref{d-j-d-jcomputation-2}) and (\ref{d-j-d-jcomputation-3}).
\end{proof}

\begin{lemma}\label{l6}
Let $\alpha = (\alpha_1,\dots,\alpha_N) \in \R^N$ and $\displaystyle d = \sum_{j=1}^{N}\alpha_jd_j$. Then we have 
 $$
\cE_{s,L}(d,d) = \displaystyle  \sum_{j=1}^{N}\alpha^2_j\cE_{L}(d_j,d_j)\le 0.
$$
Moreover, 
\begin{equation}
  \label{eq:lin-combination-negative}
\cE_{s,L}(d,d)<0 \qquad \text{if and only if}\qquad \alpha \not = 0, 
\end{equation}
and therefore the functions $d_1,\dots,d_N$ are linearly independent. 
\end{lemma}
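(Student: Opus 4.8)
The plan is to establish the quadratic form identity by expanding $\cE_{s,L}(d,d)$ bilinearly and then using the orthogonality-type relations already proved. First I would write
$$
\cE_{s,L}(d,d) = \sum_{j,k=1}^{N}\alpha_j\alpha_k \cE_{s,L}(d_j,d_k),
$$
which is legitimate since each $d_j \in \cH^s_0(\B)$ (by Lemma~\ref{l3-new} together with (\ref{eq:alternative-main-T-consequence})) and $\cE_{s,L}$ is a bounded symmetric bilinear form on $\cH^s_0(\B)$ — note the embedding $\cH^s_0(\B)\hookrightarrow L^2(\B)$ and the boundedness of $f'(u)$ on $\overline\B$ guarantee that $\cE_{s,L}$ is continuous. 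The key point is then to replace one copy of $d_k$ by $v^k$ in the off-diagonal and diagonal terms. Since $v^j d_j = d_j^2$ pointwise (by the very definition of $d_j$ in Definition~\ref{test-functions-built}), we have $\int_\B f'(u) v^j d_j\,dx = \int_\B f'(u) d_j^2\,dx$; combined with $\cE_s(d_j-v^j, d_j)$ being exactly what was computed in the proof of Lemma~\ref{l4}, this shows $\cE_{s,L}(d_j,d_j)$ equals the expression treated there.

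For the off-diagonal terms $j\neq k$, the plan is to show $\cE_{s,L}(d_j,d_k)=0$. Here I would use that $d_k$ has compact support in $\B$ when $\psi_0(1)\neq 0$ (Lemma~\ref{l3-new}(i)), or that $v^k\in\cH^s_0(\B)$ when $s\in(\frac12,1)$ (Lemma~\ref{l3-new}(ii)); in either case Corollary~\ref{v-j-d-k-energy} gives $\cE_{s,L}(v^j,d_k) = \cE_s(v^j,d_k) - \int_\B f'(u)v^j d_k\,dx = 0$, i.e.\ (\ref{eq:weak-formulation-d-j-consequence}). So the remaining task is to see that $\cE_{s,L}(d_j,d_k) = \cE_{s,L}(v^j,d_k)$ for $j\neq k$, equivalently $\cE_{s,L}(d_j - v^j, d_k) = 0$. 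The crucial observation is a symmetry argument: $d_j - v^j$ is odd under $\sigma_j$, while $d_k$ (for $k\neq j$) is \emph{even} under $\sigma_j$ (it is built from $v^k$, which is even in $x_j$, and from the half-space indicators $1_{H^k_\pm}$, which do not involve $x_j$). Since the kernel $k(x-y)$ is invariant under the simultaneous reflection $(x,y)\mapsto(\sigma_j x,\sigma_j y)$ and $f'(u)$ is radial hence $\sigma_j$-invariant, the bilinear form $\cE_{s,L}$ pairs an odd function against an even function to zero. This forces all cross terms to vanish and yields $\cE_{s,L}(d,d) = \sum_j \alpha_j^2\,\cE_{s,L}(d_j,d_j)$.

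Once the identity holds, the inequality $\cE_{s,L}(d,d)\le 0$ is immediate from Lemma~\ref{l4}, which gives $\cE_{s,L}(d_j,d_j)<0$ for every $j$; and (\ref{eq:lin-combination-negative}) follows since a sum $\sum_j\alpha_j^2\,\cE_{s,L}(d_j,d_j)$ of strictly negative coefficients times squares is zero precisely when all $\alpha_j=0$. Finally, linear independence of $d_1,\dots,d_N$ is a formal consequence: if $\sum_j\alpha_jd_j = 0$ in $\cH^s_0(\B)$ with $\alpha\neq 0$, then $\cE_{s,L}(d,d)=0$, contradicting (\ref{eq:lin-combination-negative}).

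The main obstacle I anticipate is justifying the vanishing of the cross terms cleanly. One must be careful that the symmetry/parity cancellation argument is carried out at the level of absolutely convergent integrals — this is exactly why the compact-support reduction (Lemma~\ref{l3-new}(i)) or the membership $v^k\in\cH^s_0(\B)$ (Lemma~\ref{l3-new}(ii)) matters, since they place us in a setting where Lemma~\ref{new-lemma-1} and Corollary~\ref{v-j-d-k-energy} apply and $\cE_s$ is genuinely a convergent Lebesgue integral rather than merely a principal value. With that in hand, the parity argument in the reflected variables is a routine change of variables, but it is the one place where the nonlocal structure could hide a subtlety, so I would spell it out explicitly.
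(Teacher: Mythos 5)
Your proof is correct and follows essentially the same route as the paper's: the cross terms $\cE_{s,L}(d_j,d_k)$ for $j\neq k$ vanish by the odd/even parity cancellation under the reflection $\sigma_j$ (the paper applies this directly to $d_j$ rather than splitting off $v^j$ first, but the mechanism is identical), and the diagonal terms are handled by Lemma~\ref{l4}. The only difference is your harmless extra detour through $\cE_{s,L}(v^j,d_k)=0$, which the direct parity argument makes unnecessary.
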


\begin{proof}
We first note that 
\begin{equation}
\label{eq:l5}
\cE_{s,L}(d_j,d_k)= 0 \qquad \text{for $j, k \in \{1, \dots , N\}$, $j\neq k$.}
\end{equation}
Indeed, since $u$ is radially symmetric, the function $d_j$ is odd with respect to the reflection $\sigma_j$ and even with respect to the reflection $\sigma_k$ for $k \not = j$. Hence, by a change of variable, 
\begin{align*}
&\cE_{s,L}(d_j, d_k)=\frac{c(N,s)}{2}\int_{\R^N}\int_{\R^N}\frac{\Big(d_j(\sigma_j(x))-d_j(\sigma_j(y))\Big)\Big(d_k(\sigma_j(x)) - d_k(\sigma_j(y))\Big)}{|\sigma_j(x)-\sigma_j(y)|^{N+2s}}\ dxdy\\
&\hspace{6cm} -\int_{\B}f'(u(\sigma_j(x)))d_j(\sigma_j(x))d_k(\sigma_j(x))\ dx\\
&=\frac{c(N,s)}{2}\int_{\R^N}\int_{\R^N}\frac{\Big(d_j(y)-d_j(x)\Big)\Big(d_k(x) - d_k(y)\Big)}{|x-y|^{N+2s}}\ dxdy+ \int_{\B}f'(u(x))d_j(x)d_k(x)\ dx\\
&=-\cE_{s,L}(d_j,d_k).
\end{align*}
Hence (\ref{eq:l5}) is true. Now, for $\alpha = (\alpha_1,\dots,\alpha_N) \in \R^N$ and $d = \sum \limits_{j=1}^{N}\alpha_jd_j$, we have 
$$
\cE_{s,L}(d,d)=\sum_{j=1}^{N}\alpha_j^2\cE_{s,L}(d_j,d_j)+\sum_{\substack{j,k=1\\j\neq k}}^{N}\alpha_j\alpha_k\cE_{s,L}(d_j,d_k)= \sum_{j=1}^{N}\alpha_j^2\cE_{s,L}(d_j,d_j)\le 0
$$
by \eqref{eq:l5} and Lemma~\ref{l4}. Moreover, if $\alpha \not =0$, it follows from Lemma~\ref{l4}
that $\cE_{s,L}(d,d) <0$, which in particular implies that $d \not =0$. Consequently, the functions $d_1,\dots,d_N$ are linearly independent, as claimed.  
\end{proof}

\begin{lemma}\label{the-first-eigenfunction-of-L}
The first eigenvalue $\lambda_{1,L}$ of the operator $L = (-\Delta)^s - f'(u)$ is simple, and the corresponding eigenspace is spanned by radially symmetric eigenfunction $\phi_{1,L}$. Furthermore, 
$$
\cE_{s,L}(d_j,\phi_{1,L})=0 \quad \text{for $j=1,2, \dots, N$}\qquad \text{and}\qquad\lambda_{1,L}= \cE_{s,L}(\phi_{1,L},\phi_{1,L})<0. 
$$
\end{lemma}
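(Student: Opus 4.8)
The plan is to prove Lemma~\ref{the-first-eigenfunction-of-L} in three stages: first establish that $\lambda_{1,L}<0$, then prove simplicity together with radial symmetry of the first eigenfunction, and finally deduce orthogonality of $\phi_{1,L}$ to the functions $d_j$. For the first stage, I would use the test functions already at hand: by Lemma~\ref{l4} we have $\cE_{s,L}(d_j,d_j)<0$ for some (indeed every) $j$, and since $d_j \in \cH^s_0(\B)$ by Lemma~\ref{l3-new} together with \eqref{eq:alternative-main-T-consequence}, the variational characterization \eqref{eigenvalue-characterization} with $n=1$ applied to the one-dimensional space $V = \R d_j$ immediately gives $\lambda_{1,L} \le \cE_{s,L}(d_j,d_j)/\|d_j\|_{L^2(\B)}^2 < 0$. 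In particular $\lambda_{1,L}<0$, and $L$ has a well-defined first eigenvalue with an associated eigenfunction $\phi_{1,L} \in \cH^s_0(\B)$ normalized so that $\cE_{s,L}(\phi_{1,L},\phi_{1,L}) = \lambda_{1,L}\|\phi_{1,L}\|^2_{L^2(\B)}<0$.

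For the second stage, the standard route is the Perron--Frobenius / ground-state argument. Since $\cE_{s,L}(v,v) = \cE_s(v,v) - \int_\B f'(u)v^2\,dx$ and $\cE_s(|v|,|v|) \le \cE_s(v,v)$ with $\int_\B f'(u)|v|^2 = \int_\B f'(u)v^2$, any minimizer $\phi_{1,L}$ of the Rayleigh quotient may be replaced by $|\phi_{1,L}|$, so there is a nonnegative first eigenfunction. A nonnegative weak solution of $(-\Delta)^s\phi_{1,L} = (f'(u)+|\lambda_{1,L}|)\phi_{1,L} \ge 0$ in $\B$ that is not identically zero is strictly positive in $\B$ by the strong maximum principle for $(-\Delta)^s$ (one may invoke Proposition~\ref{prop1} applied on subdomains, using that the eigenfunction is continuous by the regularity in Proposition~\ref{reg-prop-solutions}, or cite the standard fractional strong maximum principle). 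If $\lambda_{1,L}$ had multiplicity $\ge 2$, one could find a first eigenfunction orthogonal in $L^2(\B)$ to $\phi_{1,L}>0$, which by the same argument would have to be of one sign — contradicting orthogonality. Hence $\lambda_{1,L}$ is simple. For the radial symmetry: since $u$ is radial, $f'(u)$ is radial, so for any rotation $R \in O(N)$ the function $\phi_{1,L}\circ R$ is again a first eigenfunction; by simplicity $\phi_{1,L}\circ R = \pm \phi_{1,L}$, and since both are positive the sign must be $+$, so $\phi_{1,L}$ is radially symmetric.

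For the third stage, I would combine parity with the eigenvalue equation. Each $d_j$ is odd with respect to the reflection $\sigma_j$ (as noted after Definition~\ref{test-functions-built}), while $\phi_{1,L}$ is radial, hence even with respect to $\sigma_j$. Therefore the product $d_j \phi_{1,L}$ is odd with respect to $\sigma_j$, so $\int_\B d_j \phi_{1,L}\,dx = 0$. Now, since $\phi_{1,L} \in \cH^s_0(\B)$ and $d_j \in \cH^s_0(\B)$, and $\phi_{1,L}$ satisfies $\cE_s(\phi_{1,L},v) = \int_\B (f'(u)+\lambda_{1,L})\phi_{1,L}v\,dx$ — equivalently $\cE_{s,L}(\phi_{1,L},v) = \lambda_{1,L}\int_\B \phi_{1,L}v\,dx$ — for all $v \in \cH^s_0(\B)$, taking $v = d_j$ gives $\cE_{s,L}(d_j,\phi_{1,L}) = \cE_{s,L}(\phi_{1,L},d_j) = \lambda_{1,L}\int_\B \phi_{1,L}d_j\,dx = 0$, as claimed. (Here one uses the symmetry of the bilinear form $\cE_{s,L}$.)

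The main obstacle I expect is the rigorous justification of strict positivity and the Perron--Frobenius step in the nonlocal setting: one must be careful that $\phi_{1,L}$ is continuous (which follows from Proposition~\ref{reg-prop-solutions} applied to the linear equation it solves, since $f'(u)\phi_{1,L} \in L^\infty(\B)$), and that the fractional strong maximum principle genuinely forces $\phi_{1,L}>0$ in all of $\B$ — the nonlocality means one cannot argue purely locally, but Proposition~\ref{prop1} applied to $-\phi_{1,L}$ on $\B$ (noting $\phi_{1,L}\ge 0$ outside any subdomain and $(-\Delta)^s\phi_{1,L}\ge -|f'(u)|_\infty \phi_{1,L}$, so after shifting $L$ by a large constant the relevant operator has nonnegative zero-order term) yields the conclusion. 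Everything else is routine variational and parity bookkeeping.
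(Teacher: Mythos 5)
Your proposal is correct and follows essentially the same route as the paper: negativity of $\lambda_{1,L}$ from the test functions $d_j$ via Lemma~\ref{l4}, simplicity via the standard ground-state argument ($|\phi|$ is again a minimizer, strict positivity from the nonlocal strong maximum principle, two one-signed functions cannot be $L^2$-orthogonal), and radial symmetry from rotation invariance plus simplicity. The only genuine difference is the last step: the paper proves $\cE_{s,L}(d_j,\phi_{1,L})=0$ directly by the change of variables $x\mapsto \sigma_j(x)$ in the double integral defining $\cE_{s,L}$, obtaining $\cE_{s,L}(d_j,\phi_{1,L})=-\cE_{s,L}(d_j,\phi_{1,L})$, whereas you test the weak eigenvalue equation for $\phi_{1,L}$ against $d_j\in \cH^s_0(\B)$ and then use the parity identity $\int_{\B}\phi_{1,L}d_j\,dx=0$. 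Both are valid; your version is slightly cleaner but leans on $d_j\in\cH^s_0(\B)$ (available from Lemma~\ref{l3-new} together with \eqref{eq:alternative-main-T-consequence}), while the paper's symmetry computation only needs the bilinear form to be well defined. One small caution: Proposition~\ref{prop1} by itself yields only nonnegativity, not strict positivity; as you note, one should either cite a genuine strong maximum principle (as the paper does) or observe directly that if $\phi\ge 0$ vanished at an interior point $x_0$ then $(-\Delta)^s\phi(x_0)<0$ unless $\phi\equiv 0$, contradicting the equation there.
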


\begin{proof}
The simplicity of $\lambda_{1,L}$ and the radial symmetry of $\phi_{1,L}$ are well known, but we recall the proof for the convenience of the reader. 
The variational characterization of $\lambda_{1,L}$ is given by 
$$
\lambda_{1,L} =\inf_{v\in \cH^s_0(\B)\setminus\{0\}}\frac{\cE_{s,L}(v,v)}{\|v\|^2_{L^2(\B)}} ~=~ \inf_{M}\cE_{s,L}(v,v) \quad \text{with}\quad M= \{v\in \cH^s_0(\B):\|v\|_{L^2(\B)}=1\},
$$
and the associated minimizers $\phi \in M$ are precisely the $L^2$-normalized eigenfunctions of $L$ corresponding ot $\lambda_{1,L}$, i.e., the $L^2$-normalized (weak) solutions of 
\begin{equation}
  \label{eq:weak-eigenfunction}
L\phi =  \lambda_{1,L} \phi \quad \text{in $\B$,}\qquad \phi \equiv 0 \quad \text{in $\R^N \setminus \B$.}
\end{equation}
Moreover, if $\phi \in M$ is such a minimizer, then also $|\phi| \in M$ and 
$$
\lambda_{1,L} =\cE_{s,L}(\phi,\phi)\ge \cE_{s,L}(|\phi|,|\phi|) \ge \inf_{M}\cE_{s,L}(v,v) = \lambda_{1,L},	
$$
which implies that $|\phi|$ is also a minimizer and therefore a weak solution of (\ref{eq:weak-eigenfunction}). By the strong maximum principle for nonlocal operators (see e.g. \cite[p.312--313]{BB00} or \cite{JW-2019}), $|\phi|$ is strictly positive in $\B$. Consequently, every eigenfunction $\phi$ of $L$ is either strictly positive or strictly negative in $\B$. Consequently, $\lambda_{1,L}$ does not admit two $L^2$-orthogonal eigenfunctions, and therefore $\lambda_{1,L}$ is simple. 

Next we note that, by a simple change of variable, if $\phi$ is an eigenfunction of $L$ corresponding to $\lambda_{1,L}$, then also $\phi \circ \cR$ is an eigenfunction for every rotation $\cR \in O(N)$. Consequently, the simplicity of $\lambda_{1,L}$ implies that the associated eigenspace is spanned by a radially symmetric eigenfunction $\phi_{1,L}$.

Next, using the radially symmetry of $u$ and $\phi_{1,L}$ and the oddness of $d_j$ with respect to the reflection $\sigma_j$, we find, by a change of variable, that 
	\begin{align*}
	&\cE_{s,L}(d_j,\phi_{1,L})=\frac{c(N,s)}{2}\int_{\R^N}\int_{\R^N}\frac{(d_j(\sigma_j(x))-d_j(\sigma_j(y)))(\phi_{1,L}(\sigma_j(x))-\phi_{1,L}(\sigma_j(x)))}{|x-y|^{N+2s}}dxdy\\
	&\hspace{6cm}-\int_{\B}f'(u(\sigma_j(x)))d_j(\sigma_j(x))\phi_{1,L}(\sigma_j(x))\ dx\\
&=\frac{c(N,s)}{2}\int_{\R^N}\int_{\R^N}\frac{(d_j(y)-d_j(x))(\phi_{1,L}(x)-\phi_{1,L}(y))}{|x-y|^{N+2s}}\ dxdy+\int_{\B}f'(u(x))d_j(x)\phi_{1,L}(x)dx\\
&=-\cE_{s,L}(d_j,\phi_{1,L})
	\end{align*}
and therefore $\cE_{s,L}(d_j,\phi_{1,L})=0$ for $j=1,\dots,N$. 
Finally, by Lemma~\ref{l4} and the variational characterization of $\lambda_{1,L}$, we have $\lambda_{1,L}= \cE_{s,L}(\phi_{1,L},\phi_{1,L})<0$, as claimed.
\end{proof}

\begin{proof}[Proof of Theorem \ref{T}(completed)]
Let $\phi_{1,L} \in \cH^s_0(\B)$ be an eigenfunction of $L$ corresponding to the first eigenvalue $\lambda_{1,L}$ as given in Lemma~\ref{the-first-eigenfunction-of-L}. We consider the subspace $V = \text{ span}\{\phi_{1,L}, d_1, \dots,d_N \}$. For $\alpha \in \R^{N+1} \setminus \{0\}$ and 
$d = \alpha_0 \phi_{1,L} + \sum \limits_{j=1}^{N}\alpha_jd_j \in V$, we then have, by Lemma~\ref{l6} and Lemma~\ref{the-first-eigenfunction-of-L},   
$$
\cE_{s,L}(d,d) = \alpha_0^2\,\cE_{s,L}(\phi_{1,L},\phi_{1,L}) +  \cE_{s,L}(\sum_{j=1}^{N}\alpha_jd_j,\sum_{j=1}^{N}\alpha_jd_j)<0.
$$
In particular, it follows that the functions $\phi_{1,L}, d_1, \dots,d_N$ are linearly independent and therefore $V$ is $N+1$-dimensional. By (\ref{eigenvalue-characterization}) and the compactness of $S_V= \{v \in V\::\: \|v\|_{L^2(\B)}=1\}$, it then follows that $\lambda_{N+1,L}<0$, which means that $u$ has Morse index greater than or equal to $N+1\ge 2$, as claimed.
\end{proof}

\section{The linear case}
\label{alternative}

In this section we discuss the linear eigenvalue problem~(\ref{eq1-linear})  and complete the proof of Theorem~\ref{symmetry-of-higher-eigenfunctions}. In particular, we wish to recall a useful characterization of eigenvalues and eigenfunctions of 
(\ref{eq1-linear}) derived in \cite{dyda2017eigenvalues}. For this we need to consider the following radially symmetric version of (\ref{eq1-linear}) in general dimensions $d \in \N$: 
	\begin{equation}\label{eq1-linear-radial}
	\quad\left\{\begin{aligned}
		&(-\Delta)^su = \lambda u && \text{ in\ \  $\B \subset \R^d$}\\
		&u \in \cH^s_0(\B),&& \text{$u$ radially symmetric.}
	\end{aligned}\right.
	\end{equation}
In the following, we let $\lambda_{d,0} < \lambda_{d,1} \le \dots$ denote the increasing sequence of eigenvalues of this problem (counted with multiplicity).

The following characterization is essentially a reformulation of \cite[Proposition 1.1]{dyda2017eigenvalues}.

\begin{prop}
\label{sec:remarks-linear-case}
The eigenvalues of (\ref{eq1-linear}) in $\B \subset \R^N$ are of the form $\lambda= \lambda_{N+2\ell,n}$ with integers $\ell,n \ge 0$. Moreover, if 
$$
Z_\lambda:= \{(\ell,n)\::\: \lambda_{N+2\ell,n} = \lambda  \},
$$
then the eigenspace corresponding to $\lambda$ is spanned by functions of the form $u(x)= V_{\ell}(x)\phi_{N+2\ell,n}(|x|)$, where $(\ell,n) \in Z_\lambda$, $V_\ell$ is a solid harmonic polynomial of degree $\ell$ and $x \mapsto \phi_{N+2\ell,n}(|x|)$ is a (radial) eigenfunction of the problem (\ref{eq1-linear-radial}) in dimension $d=N+2\ell$ corresponding to the eigenvalue $\lambda_{N+2\ell,n}$.   
\end{prop}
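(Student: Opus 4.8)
The plan is to derive the statement from the spherical-harmonic decomposition of $L^2(\B)$ combined with the dimension-shifting identity that underlies \cite[Proposition 1.1]{dyda2017eigenvalues}; accordingly, most of the work is a bookkeeping translation and the analytic core can simply be quoted. First I would recall that $L^2(\B)$ splits as an orthogonal Hilbert sum $\bigoplus_{\ell \ge 0} W_\ell$, where $W_\ell$ is the span of the functions $x \mapsto V_\ell(x)\phi(|x|)$ with $V_\ell$ running over the finite-dimensional space of solid harmonic polynomials of degree $\ell$ and $\phi$ over radial profiles; this induces a corresponding splitting of $\cH^s_0(\B)$. Since $(-\Delta)^s$ commutes with the natural action of $O(N)$ on functions, it preserves each sector $W_\ell \cap \cH^s_0(\B)$ (these are the isotypic components for the $O(N)$-action), so the eigenspace associated with any eigenvalue $\lambda$ is spanned by eigenfunctions belonging to a single sector, hence of the form $u(x) = V_\ell(x)\phi(|x|)$.

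The crucial analytic input is the intertwining relation: for $V_\ell$ a solid harmonic polynomial of degree $\ell$ on $\R^N$ and $\phi$ a radial profile, the function $u(x) = V_\ell(x)\phi(|x|)$ satisfies
$$
\bigl[(-\Delta)^s u\bigr](x) = V_\ell(x)\,\bigl[(-\Delta)^s_{N+2\ell}\phi\bigr](|x|),
$$
where $(-\Delta)^s_{N+2\ell}$ denotes the fractional Laplacian acting on radial functions in dimension $N+2\ell$. This is exactly the computation performed in \cite{dyda2017eigenvalues} (ultimately a consequence of the Bochner--Hecke identity for the Fourier transform, which turns a radial Hankel transform of order $\ell$ in dimension $N$ into a purely radial transform in dimension $N+2\ell$); I would quote it rather than reprove it. Its effect is that the angular factor $V_\ell$ can be pulled out of $(-\Delta)^s$ at the price of raising the effective dimension by $2\ell$, which is precisely the Schur-lemma statement that $(-\Delta)^s$ acts on $W_\ell$ as $\mathrm{Id}\otimes A_\ell$ with $A_\ell$ the dimension-$(N+2\ell)$ radial fractional Laplacian.

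Next I would match up the function spaces and boundary conditions along the correspondence $u = V_\ell\,\phi(|\cdot|) \leftrightarrow \phi$: the condition $u \equiv 0$ on $\R^N \setminus \B$ is equivalent to $\phi(r)=0$ for $r \ge 1$, and $u \in \cH^s_0(\B \subset \R^N)$ is equivalent to the radial function $x \mapsto \phi(|x|)$ lying in $\cH^s_0(\B \subset \R^{N+2\ell})$. Granting this, $u$ is a weak eigenfunction of (\ref{eq1-linear}) in $\B \subset \R^N$ with eigenvalue $\lambda$ if and only if $x \mapsto \phi(|x|)$ is a radial weak eigenfunction of (\ref{eq1-linear-radial}) in $\B \subset \R^{N+2\ell}$ with the same eigenvalue; consequently $\lambda = \lambda_{N+2\ell,n}$ for some $n \ge 0$ and $\phi = \phi_{N+2\ell,n}$. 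Reading this equivalence in both directions, and using the orthogonality of distinct sectors $W_\ell$, the full $\lambda$-eigenspace is spanned by the functions $V_\ell(x)\phi_{N+2\ell,n}(|x|)$ with $(\ell,n) \in Z_\lambda$, which is the assertion.

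The step I expect to be the main obstacle is the careful verification of the intertwining identity together with the accompanying function-space equivalence at the level of weak solutions --- in particular, checking that multiplication by $V_\ell$ and the dimension shift are compatible with the quadratic form $\cE_s$ and with the space $\cH^s_0$, including the behaviour at the origin (where $V_\ell$ vanishes to order $\ell$) and at $\partial\B$ (where the Dirichlet condition must be read off correctly in both dimensions). Since all of this is either contained in or follows routinely from \cite{dyda2017eigenvalues}, the remainder of the argument is purely organisational, and it is in this sense that the proposition is \emph{essentially a reformulation} of their Proposition 1.1.
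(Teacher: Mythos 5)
Your proposal is correct and matches the paper's treatment: the paper gives no independent proof of this proposition, stating only that it is ``essentially a reformulation of [Proposition 1.1]{dyda2017eigenvalues}'', which is exactly the reduction you carry out (spherical-harmonic sectors plus the quoted dimension-shifting intertwining identity). Your additional bookkeeping about the $O(N)$-isotypic decomposition and the function-space correspondence is a sound elaboration of what the paper leaves implicit.
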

Here and in the following, a solid harmonic polynomial $V$ of degree $\ell$ is a function of the form $V(x)=|x|^\ell Y(\frac{x}{|x|})$, where $Y$ is a spherical harmonic of degree $\ell$. Hence $V: \R^N \to \R$ is a homogenous polynomial of degree $\ell$ satisfying $\Delta V=0$. 

Regarding the eigenvalues $\lambda_{d,n}$ of (\ref{eq1-linear-radial}), it is also proved in \cite[Section 3]{dyda2017eigenvalues} that\begin{equation}
  \label{eq:sequence-increasing}
\text{the sequence $(\lambda_{d,0})_d$ is strictly increasing in $d \ge 1$.}  
\end{equation}
Moreover, 
\begin{equation}
  \label{eq:simplicity-consequence}
\lambda_{d,n}>\lambda_{d,0}\qquad \text{for every $d,n \ge 1$}   
\end{equation}
by the simplicity of the first eigenvalue of (\ref{eq1-linear-radial}). Consequently, the first eigenvalue $\lambda_1$ of (\ref{eq1-linear}) equals $\lambda_{N,0}$, whereas the second eigenvalue $\lambda_2$ of (\ref{eq1-linear}) is given as the minimum of $\lambda_{N+2,0}$ and $\lambda_{N,1}$.

Theorem~\ref{symmetry-of-higher-eigenfunctions} is now a direct consequence of the following result, which we will derive from Theorem~\ref{T} and from the observations above.

\begin{thm}
\label{sec:remarks-linear-case-1-thm}
We have $\lambda_{N+2,0}< \lambda_{N,1}$. Consequently, the second eigenvalue $\lambda_2$ of (\ref{eq1-linear}) is given by $\lambda_{N+2,0}$, and every corresponding eigenfunction $u$ is antisymmetric, i.e., it satisfies $u(-x)=-u(x)$ for every $x \in \B$.   
\end{thm}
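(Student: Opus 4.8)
The plan is to argue by contradiction: suppose $\lambda_{N+2,0} \ge \lambda_{N,1}$, so that the second eigenvalue $\lambda_2$ of \eqref{eq1-linear} equals $\lambda_{N,1}$ (using \eqref{eq:simplicity-consequence}, which already rules out $\lambda_{N,1}=\lambda_{N,0}=\lambda_1$). By Proposition~\ref{sec:remarks-linear-case}, the eigenspace of $\lambda_2$ then contains a radially symmetric eigenfunction $u(x)=\phi_{N,1}(|x|)$ of \eqref{eq1-linear-radial} in dimension $d=N$. This $u$ is a radially symmetric solution of \eqref{eq1} with the linear nonlinearity $f(t)=\lambda_2 t$, and it is sign changing: indeed, since $u$ is $L^2$-orthogonal to the first eigenfunction $\phi_{N,0}$, which is strictly positive in $\B$ (by the strong maximum principle, exactly as in the proof of Lemma~\ref{the-first-eigenfunction-of-L}), $u$ must change sign. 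Since $f(t)=\lambda_2 t$ satisfies \eqref{eq:add-cond} for all $s\in(0,1)$ (the inequality reads $\tfrac{\lambda_2}{2}t^2 > \tfrac{N-2s}{2N}\lambda_2 t^2$, which holds because $\tfrac{N-2s}{2N}<\tfrac12$), Theorem~\ref{T} applies and gives $m(u)\ge N+1$.

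Next I would compute the actual Morse index of such a $u$ directly and reach a contradiction. The linearized operator at $u$ is $L=(-\Delta)^s - \lambda_2$, whose Dirichlet eigenvalues in $\B$ are exactly $\mu_k - \lambda_2$, where $\mu_1=\lambda_1<\mu_2=\lambda_2\le\mu_3\le\cdots$ are the Dirichlet eigenvalues of $(-\Delta)^s$ in $\B$. Hence $m(u)$ equals the number of eigenvalues $\mu_k$ of $(-\Delta)^s$ in $\B$ with $\mu_k<\lambda_2$, counted with multiplicity; under our contradiction hypothesis $\lambda_2=\lambda_{N,1}$, the only such eigenvalue is $\mu_1=\lambda_1=\lambda_{N,0}$, which is simple. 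Therefore $m(u)=1$. Since $1 < N+1$, this contradicts the lower bound from Theorem~\ref{T}. This forces $\lambda_{N+2,0}<\lambda_{N,1}$, so $\lambda_2=\lambda_{N+2,0}$.

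Finally, with $\lambda_2=\lambda_{N+2,0}$ established, Proposition~\ref{sec:remarks-linear-case} identifies the eigenspace of $\lambda_2$: the relevant index set $Z_{\lambda_2}$ consists of pairs $(\ell,n)$ with $\lambda_{N+2\ell,n}=\lambda_{N+2,0}$; by \eqref{eq:sequence-increasing} and \eqref{eq:simplicity-consequence} the only possibility is $(\ell,n)=(1,0)$, so every eigenfunction has the form $u(x)=V_1(x)\phi_{N+2,0}(|x|)$ with $V_1$ a solid harmonic polynomial of degree $1$, i.e. $V_1$ linear. Such $u$ satisfies $u(-x)=V_1(-x)\phi_{N+2,0}(|x|)=-V_1(x)\phi_{N+2,0}(|x|)=-u(x)$, which is the claimed antisymmetry. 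The main obstacle is the logical packaging in the first step — namely being careful that the dichotomy from \cite{dyda2017eigenvalues} (radial second eigenfunction versus product-of-linear-and-radial) is correctly extracted from Proposition~\ref{sec:remarks-linear-case} and \eqref{eq:sequence-increasing}–\eqref{eq:simplicity-consequence}, and that the sign-changing property of the radial candidate is rigorously justified so that Theorem~\ref{T} genuinely applies; the Morse-index count itself is then routine spectral bookkeeping.
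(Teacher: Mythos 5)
Your proposal is correct and follows essentially the same route as the paper: contradiction via Proposition~\ref{sec:remarks-linear-case} to produce a radial second eigenfunction, then Theorem~\ref{T} (with (\ref{eq:add-cond}) checked for $f(t)=\lambda_2 t$) against the Morse index $1$ of a second eigenfunction, and finally the identification $Z_{\lambda_2}=\{(1,0)\}$ giving antisymmetry. The only difference is that you spell out details the paper leaves implicit (the sign-changing property via orthogonality to the positive first eigenfunction, and the explicit spectral count $m(u)=1$), which is a welcome addition rather than a deviation.
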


\begin{proof}
Suppose by contradiction that $\lambda_2 = \lambda_{N,1} \le \lambda_{N+2,0}$. Then, noting that the only solid harmonic polynomials of degree zero are the constants, it follows from Proposition~\ref{sec:remarks-linear-case} that (\ref{eq1-linear}) admits a radially symmetric eigenfunction corresponding to $\lambda_2$. But then $u$ is a radially symmetric sign changing solution of (\ref{eq1}) with $t \mapsto f(t)= \lambda_2 t$, so it must have Morse index greater than or equal to $N+1$. This contradicts the fact that $\lambda_2$ is the second eigenvalue.\\
We thus conclude that $\lambda_2 = \lambda_{N+2,0} <\lambda_{N,1}$. Combining this inequality with (\ref{eq:sequence-increasing}) and (\ref{eq:simplicity-consequence}), we then deduce that $Z_{\lambda_2}= \{(1,0)\}$, and therefore the eigenspace corresponding to $\lambda_2$ is spanned by functions of the form $x \mapsto V_1(x) \phi_{N+2,0}(|x|)$, where $V_1$ is a solid harmonic polynomial of degree one, hence a linear function, and $x \mapsto \phi_{N+2,0}(|x|)$ is an eigenfunction of the problem (\ref{eq1-linear-radial}) in dimension $d=N+2$ corresponding to the eigenvalue $\lambda_{N+2,0}$. Since every such function is antisymmetric, the claim follows. 
\end{proof}

\bibliographystyle{amsplain}

\begin{thebibliography}{10}

\bibitem{aftalion2004qualitative} A. Aftalion and F. Pacella, \emph{Qualitative properties of nodal solutions of semilinear elliptic equations in radially symmetric domains.}   Comptes Rendus Mathematique 339, no. 5 (2004): 339--344.

\bibitem{applebaum2004levy} D. Applebaum, \emph{L\'{e}vy processes-from probability to finance and quantum groups.} Notices of the AMS 51, no. 11 (2004): 1336--1347.

\bibitem{applebaum2009levy} D. Applebaum, \emph{L\'{e}vy processes and stochastic calculus.} Cambridge University Press, 2009.

 \bibitem{ban-kulc} R.  Ba\~{n}uelos and T.  Kulczycki, \emph{ The Cauchy process and the Steklov problem.} Journal of Functional Analysis 211, no. 2 (2004): 355--423. 
 
 \bibitem{BB00} K.~Bogdan and T.~Byczkowski, \emph{Potential Theory of Schr\"{o}dinger Operator based on fractional Laplacian.}
Probability and  Mathematical Statistics 20, no.2 (2000): 293--335.

\bibitem{bucur-valdinoci} C. Bucur and E. Valdinoci, \emph{Nonlocal Diffusion and Applications}. Lecture Notes of the Unione Matematica Italiana, vol. 20. Springer, Cham; Unione Matematica Italiana, Bologna, 2016.
  
\bibitem{caffarelli2007extension} L. Caffarelli and L. Silvestre, \emph{An extension problem related to the fractional Laplacian.} Communications in Partial Differential Equations, no. 8 (2007): 1245--1260.

\bibitem{di2012hitchhiker} E. Di Nezza, G. Palatucci and E. Valdinoci, \emph{Hitchhiker's guide to the fractional Sobolev spaces. }Bulletin des Sciences Math{\'e}matiques 136, no. 5 (2012): 521--573.

\bibitem{dyda2017eigenvalues}B. Dyda, A. Kuznetsov and M. Kwa{\'s}nicki, \emph{Eigenvalues of the fractional Laplace operator in the unit ball.} Journal of the London Mathematical Society 95, no. 2 (2017): 500--518.

\bibitem{fall-jarohs-2015} M.M. Fall and S. Jarohs, \emph{ Overdetermined problems with fractional Laplacian.}  ESAIM: Control, Optimization and Calculus  of  Variations 21, no.4 (2015): 924--938.

\bibitem{FS-2019} M.M.  Fall and S. Jarohs, \emph{Gradient estimates in fractional Dirichlet problems.}  Potential Analysis (2020). https://doi.org/10.1007/s11118-020-09842-8
  
\bibitem{RF19}
R. A. Ferreira, \emph{ Anti-symmetry of the second eigenfunction of the fractional Laplace operator in a 3-D ball.} Nonlinear Differential Equations and Applications NoDEA 26, no.1 (2019): Article 6.

\bibitem{G11}
P.~Grisvard, \emph{ Elliptic problems in nonsmooth domains.} 
Monographs and Studies in Mathematics 21, Pitman, Boston, 1985.

\bibitem{Gr14}
G. Grubb, \emph{ Local and nonlocal boundary conditions for $\mu$-transmission and fractional elliptic pseudodifferential operators.} Analysis \& PDE 7, no. 7 (2014): 1649--1682.

\bibitem{JW-2019} S. Jarohs and T. Weth, \emph{ On the strong maximum principle for nonlocal operators}.  Mathematische Zeitschrift 293, no. 1 (2019): 81--111.


\bibitem{kwas} M. Kwa{\'s}nicki, \emph{Eigenvalues of the fractional Laplace operator in the interval.} Journal of Functional Analysis 262, no. 5 (2012): 2379--2402. 

\bibitem{R27} C. S. Lin, \emph{ On the second eigenfunctions of the Laplacian in $\R^2$.} Communications in Mathematical Physics 111, no. 2 (1987): 161--166.

\bibitem{luo2018sign} H. Luo, X. Tang and Z. Gao, \emph{Sign-changing solutions for non-local elliptic equations with asymptotically linear term.} Communications on Pure and Applied Analysis 17, no. 3 (2018): 1147--1150.

\bibitem{Ros14}    	
X. Ros-Oton and J. Serra, \emph{ The Dirichlet problem for the fractional Laplacian: regularity up to the boundary.} Journal de Math\'{e}matiques Pures et Appliqu\'{e}es 101, no. 3 (2014): 275--302.

\bibitem{Ros14-1}
X. Ros-Oton and J. Serra, \emph{The Pohozaev identity for the fractional Laplacian.} Archive for Rational Mechanics and Analysis 213 (2014): 587--628.

\bibitem{Ls07} L. Silvestre, \emph{ Regularity of the obstacle problem for a fractional power of the Laplace operator.} Communications on Pure and Applied Mathematics: A Journal Issued by the Courant Institute of Mathematical Sciences 60, no. 1 (2007): 67--112.

\bibitem{teng-wang-wang} K. Teng, K. Wang and   R. Wang. \emph{A sign-changing solution for nonlinear problems involving the fractional Laplacian.} Electronic Journal of Differential Equations 105 (2015): 1--12. 

\bibitem{wang2016radial} Z. Wang and H. S. Zhou, \emph{Radial sign-changing solution for fractional Schr\"{o}dinger equation.} Discrete and Continuus Dynamical Systems 36, no. 1 (2016): 499--508.

\end{thebibliography}

\end{document}